\newtheorem{theorem}{Theorem}
\newtheorem{conjecture}[theorem]{Conjecture}
\newtheorem{lemma}[theorem]{Lemma}
\theoremstyle{remark}
\newtheorem*{remark}{Remark}
\newtheorem*{example}{Example}
\numberwithin{equation}{section}
\newcommand{\ii}{i}
\newcommand{\dd}{\mathrm{d}}
\newcommand{\E}{\mathbb{E}}
\newcommand{\RR}{\mathbb{R}}
\newcommand{\CC}{\mathbb{C}}
\begin{document}

\title[Moments of the derivative of zeta]{Complex moments of the derivative of the Riemann zeta function}

\author[C. Hughes]{Christopher Hughes}
\address{Department of Mathematics, University of York, York, YO10 5GH, United Kingdom}
\email{christopher.hughes@york.ac.uk}
\author[A. Pearce-Crump]{Andrew Pearce-Crump}
\address{School of Mathematics, Fry Building, Woodland Road, Bristol, BS8 1UG, United Kingdom}
\email{andrew.pearce-crump@bristol.ac.uk}

\begin{abstract}
We conjecture results about the complex moments of the derivative of the Riemann zeta function, evaluated at the non-trivial zeros of the Riemann zeta function. We do this via two different random matrix computations. In the first, we find an exact formula for the complex moments of the derivative of the characteristic polynomials of unitary matrices averaged over Haar measure using Selberg's integral. In the second, we consider the hybrid approach for zeta, first proposed by Gonek, Hughes and Keating. 
\end{abstract}

\maketitle

\section{Introduction}\label{Sect:Intro}
Let $\zeta (s)$ be the Riemann zeta function. Throughout this paper we assume the Riemann Hypothesis, that is, that all non-trivial zeros can be written as $\rho = \tfrac{1}{2} + i \gamma$ with $\gamma$~real.

The purpose of this paper is to make the following conjecture on discrete moments of the derivative of the Riemann zeta function, which we formulate by developing the philosophy of Keating and Snaith \cite{KS00a}, Hughes, Keating and O'Connell \cite{HKO00}, and Gonek, Hughes and Keating \cite{GHK07} from random matrix theory to discrete moments the zeta function.

\begin{conjecture}\label{conj:n1}
For $\Re(k)>-3$
\begin{equation*}
    \frac{1}{N(T)} \sum_{0 < \gamma \leq T} \zeta' \left( \tfrac{1}{2} + i \gamma \right)^{k} \sim \frac{1}{\Gamma(k +2)} \left( \log \frac{T}{2\pi} \right)^{k},
\end{equation*}
as $T \to \infty$, where $\Gamma (z)$ is the Gamma function and where $N(T)$ is the number of non-trivial zeros in the critical strip, with $0<\gamma\leq T $, given by
\begin{equation*}\label{N(T)}
    N(T) =\frac{T}{2 \pi} \log \frac{T}{2 \pi e} + O(\log T).
\end{equation*}
\end{conjecture}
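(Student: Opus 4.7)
My plan is to obtain Conjecture~\ref{conj:n1} through two independent random matrix heuristics. In both approaches, the random matrix calculation furnishes the analytic content, and the translation to zeta is then dictated by the Keating--Snaith dictionary which identifies the matrix size $N$ with the local mean density parameter $L := \log(T/2\pi)$.

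For the first approach, I would begin from the modelling identification $\zeta'(\rho) \leftrightarrow Z_U'(\theta_n)$, where $Z_U(\theta) := \det(I - e^{-i\theta}U)$ is the characteristic polynomial of an $N \times N$ Haar-distributed unitary $U$ and $e^{i\theta_n}$ runs over its eigenvalues. The aim is an exact formula for
\begin{equation*}
M_N(k) := \E_{U(N)}\!\left[\frac{1}{N}\sum_{n=1}^{N} Z_U'(\theta_n)^{k}\right].
\end{equation*}
Using the Weyl integration formula, extracting the contribution of a single eigenvalue by symmetry, and fixing $\theta_1 = 0$ by rotational invariance, the problem reduces to an $(N-1)$-fold integral of $\prod_{j \geq 2}(1-e^{i\theta_j})^{k}$ against a squared Vandermonde on the torus. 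This is exactly the type of integral evaluated in closed form by the Morris variant of Selberg's integral, which produces a finite product of Gamma functions in $k$ and $N$. Expanding this product as $N \to \infty$ should give $M_N(k) \sim N^{k}/\Gamma(k+2)$, and the substitution $N \to L$ then yields the conjectural asymptotic.

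For the second approach, I would invoke the hybrid factorisation $\zeta(s) = P_X(s)Z_X(s)$ of Gonek, Hughes and Keating, where $P_X$ is a short Euler product over primes $p \leq X$ and $Z_X$ is the ``zeros'' factor modelled by a characteristic polynomial. Because $Z_X(\rho) = 0$, one has $\zeta'(\rho) = P_X(\rho)Z_X'(\rho)$, so the moment splits into an arithmetic factor built from $P_X(\rho)^{k}$ and a random matrix factor built from $Z_X'(\rho)^{k}$. The random matrix factor is computed by the first approach at effective size $N = L - \log X$, while the arithmetic factor should contribute $1$ to leading order under the standard hybrid scaling $X \to \infty$ with $\log X = o(L)$. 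Combining the two recovers the same leading asymptotic and gives an independent derivation of the constant $1/\Gamma(k+2)$.

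The main obstacle is handling the complex power $Z_U'(\theta_n)^{k}$ for general $k$ with $\Re(k) > -3$: a consistent branch of $\log Z_U'(\theta_n)$ must be fixed, the Morris product formula must be analytically continued from its standard real parameter range to the claimed half-plane, and one must verify that the pole/zero structure of the resulting ratio of Gamma functions is compatible with the announced boundary $\Re(k) = -3$, below which sub-leading contributions in the exact Selberg-type evaluation become comparable to $N^{k}/\Gamma(k+2)$. Tracking those sub-leading terms and justifying their negligibility in the claimed range is the delicate technical point on which the conjecture stands.
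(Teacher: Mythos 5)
Your two-pronged strategy---a direct Selberg/Morris evaluation of $\E_N\bigl[\frac{1}{N}\sum_n Z'(\theta_n)^k\bigr]$, paired with the hybrid factorisation $\zeta'(\rho) = P_X(\rho)Z_X'(\rho)$ and a splitting-conjecture heuristic---is exactly the structure the paper follows (Theorems~\ref{thm:RMTn1}, \ref{thm:Z_X}, \ref{thm:PXComplex}). Two points where your sketch diverges from what actually happens are worth noting. First, you assert that the $Z_X'$ factor is ``computed by the first approach at effective size $N = L - \log X$.'' In the paper the moments of the modelled $Z_{N,X}'$ are not obtained by re-running the Selberg calculation at reduced matrix size; they are computed via a Toeplitz determinant with a Fisher--Hartwig symbol, using the Ehrhardt--Silbermann asymptotics, and the answer comes out as $\frac{e^{i\pi k/2}}{\Gamma(k+2)}N^k$ with \emph{no} $\log X$ reduction at all --- the $e^{kF_X(0)}$ prefactor exactly cancels the $\exp(-\sum s_m)$ contribution from the Fourier coefficients. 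This is the conceptual crux: for complex (rather than absolute) moments, neither the $P_X$ nor the $Z_X'$ factor carries a $\log X$ power at leading order, which is precisely why no arithmetic factor $a(k)$ appears in Conjecture~\ref{conj:n1}, in contrast to the Hughes--Keating--O'Connell conjecture. Your ``effective size'' picture, borrowed from the absolute-value setting, gives the right leading order only because $\log X = o(L)$, but it obscures this cancellation. Second, you do not address the phase $e^{i\pi k/2}$ produced by the Selberg evaluation: the matrix-side derivative is with respect to $\theta$, which corresponds to $\frac{d}{dt}\zeta(\tfrac12+it) = i\,\zeta'(\tfrac12+it)$, and one must strip out this $i^k$ when passing to the zeta conjecture; without that observation you would not land on the constant $1/\Gamma(k+2)$ cleanly. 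Finally, the paper also checks the case $k=1$ rigorously via a twisted first moment (Theorem~\ref{thm:twist}), giving some unconditional evidence for the splitting heuristic, which your proposal does not include.
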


We prove a number of random matrix results that lead to this conjecture, and they are detailed in Section~\ref{sect:RMTresults}. 

It is worth highlighting here that in order to take complex powers of complex numbers, we need to define the appropriate branch of the logarithm. The way the branch was chosen (discussed in Section~\ref{sect:RMTresults}) is harder to directly apply here without considering the Weierstrass product for zeta. We note that the appropriate choice of branch is not the same as that which comes from continuously varying $\log\zeta'(s)$.

We note that this conjecture agrees with the two previously known cases (ignoring the trivial $k=0$ case): 
\begin{itemize}
    \item The case $k=1$ was proven by Conrey, Ghosh, and Gonek in \cite{CGG88}, unconditionally.
    \item The case $k=-1$ can be found as equation (3.9) in a paper of Garaev and Sankaranarayanan \cite{GarSan06}, under the assumption of simplicity of zeros. It was previously known under the additional assumption of the Riemann Hypothesis, and can be found in various sources, including  \cite{ingham1942two,MV2007,milinovich2012}. 
\end{itemize}

Before forming this general conjecture, no other cases were known or even conjectured.

\begin{remark}
    We make a final remark about the special case when $k=-2$: the right hand side of Conjecture~\ref{conj:n1} is zero. Indeed, in the random matrix calculations that lead to this conjecture, the equivalent calculation equals zero exactly. In the case of zeta, what we mean is the left hand side is bounded by some error terms with no leading-order asymptotic. An educated guess suggests the error terms will be $O(T^{-1/2+\varepsilon})$. 
\end{remark}

\begin{remark}
    In a future paper \cite{HPC}, we will consider the moments of mixed derivatives case, that is, when we consider moments of the form
\[
\sum_{0 < \gamma \leq T} \zeta^{(n_1)} \left( \tfrac{1}{2} + i \gamma \right)\dots\zeta^{(n_k)} \left( \tfrac{1}{2} + i \gamma \right)
\]
for $n_1,\dots,n_k \in \mathbb{N}$, as $T \rightarrow \infty$. In particular, in this integer moment case, we will also give full asymptotic formulae for such sums.  As a special case, we can set all derivative to be equal and consider integer moments of any derivative of the Riemann zeta function given by
\[
\sum_{0 < \gamma \leq T} \zeta^{(n)} \left(\tfrac{1}{2} + i \gamma  \right)^{k}
\]
as $T \rightarrow \infty$ for any $k \in \mathbb{N}$.
\end{remark}

\section{Philosophy of predicting moments of the zeta function through random matrix theory}\label{sect:PreviousRMTresults}
In this section we describe the philosophy of Keating and Snaith \cite{KS00a} and Hughes, Keating and O'Connell \cite{HKO00} from random matrix theory to moments of the zeta function. 

The model proposed by Keating and Snaith \cite{KS00a} is that the characteristic polynomial of a large random unitary matrix can be used to model the value distribution of the Riemann zeta function near a large height $T$. They used the characteristic polynomial
\begin{equation} \label{ZTheta}
    Z(\theta,A) = \det (I - A e^{-i \theta}) = \prod_{n=1}^{N} (1-e^{i(\theta_n - \theta)}),
\end{equation}
where the $\theta_n$ are the eigenangles of an $N \times N$ random unitary matrix $A$, to model $\zeta (s)$. The motivation for studying this is that it has been conjectured that the limiting distribution of the non-trivial zeros of $\zeta (s)$, on the scale of their mean spacing, is asymptotically the same as that of the eigenangles $\theta_n$ of matrices chosen according to Haar measure, in the limit as $N \to \infty$. Equating the mean densities yields the connection between matrix size and height up the critical line
\begin{equation*}
    N = \log \frac{T}{2 \pi}.
\end{equation*}

Averaging over all Haar-distributed $N\times N$ unitary matrices, Keating and Snaith \cite{KS00a} calculated the moments of $|Z(\theta)|$ and found that for fixed real $\theta$, if $\Re(k)>-1/2$, as $N\to\infty$
\begin{equation*}
\mathbb{E}_N \left[ |Z(\theta,A)|^{2k} \right] \sim \frac{G^2 (k+1)}{G(2k+1)} N^{k^2}
\end{equation*}
where $\mathbb{E}_N$ denotes expectation with respect to Haar measure on $N\times N$ unitary matrices, and $G(z)$ is the Barnes $G$-function.

Comparing with all known and conjectured results for continuous moments of zeta, they were lead to conjecture the following result.
\begin{conjecture}[Keating--Snaith]
For any fixed $k$ with $\Re (k) > -1/2$,
\begin{equation*}
    \frac{1}{T} \int_{0}^{T} \left|\zeta \left(\tfrac{1}{2} + it \right) \right|^{2k} \ dt \sim a(k) \frac{G^2 (k+1)}{G(2k+1)} \left( \log \frac{T}{2\pi} \right)^{k^2},
\end{equation*}
as $T \rightarrow \infty$, and
\begin{equation}\label{AF}
    a(k) = \prod_{p \text{ prime}} \left( 1 - \frac{1}{p} \right)^{k^2} \sum_{m=0}^{\infty} \left( \frac{\Gamma (m+k)}{m! \, \Gamma (k)} \right)^2 p^{-m}
\end{equation}
is an arithmetic factor which is discussed below.
\end{conjecture}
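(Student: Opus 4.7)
The plan is to derive this conjecture heuristically by marrying an exact random matrix average to a decomposition of $\zeta$ into arithmetic and geometric parts, since the statement is a conjecture and not a theorem. The first step is to fix the dictionary: model $\zeta(1/2+it)$ near height $T$ by the characteristic polynomial $Z(\theta,A)$ of an $N\times N$ Haar unitary matrix with $N=\log(T/2\pi)$, chosen so that the eigenangle density matches the density of zeros on the critical line. One then evaluates $\E_N[|Z(\theta,A)|^{2k}]$ exactly: Weyl's integration formula reduces the average to an integral over eigenangles, which is a Selberg integral with closed form $\prod_{j=1}^{N}\Gamma(j)\Gamma(j+2k)\Gamma(j+k)^{-2}$. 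Barnes $G$-function asymptotics give the leading term $G^2(k+1)G(2k+1)^{-1}N^{k^2}$ for $\Re(k)>-1/2$, and substituting $N=\log(T/2\pi)$ yields both the random matrix coefficient and the power $(\log T/2\pi)^{k^2}$.

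This cannot be the full prediction, because the unitary group carries no information about primes. To supply $a(k)$ I would invoke the Gonek--Hughes--Keating hybrid heuristic, writing $\zeta(1/2+it)\approx P_X(1/2+it)\,Z_X(1/2+it)$, where $P_X$ is a short Euler product over primes $p\le X$ and $Z_X$ is a Hadamard-type product over zeros within $O(1/\log X)$ of $1/2+it$. A direct computation using multiplicativity, together with Mertens' theorem, shows
\[
\frac{1}{T}\int_0^T |P_X(1/2+it)|^{2k}\,dt \sim a(k)(\log X)^{k^2},
\]
while the random matrix model applied to $|Z_X|^{2k}$, now with matrix size reduced to $\log(T/2\pi)/\log X$ so that only zeros within distance $O(1/\log X)$ are modelled, contributes $G^2(k+1)G(2k+1)^{-1}(\log(T/2\pi)/\log X)^{k^2}$. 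Assuming $P_X$ and $Z_X$ are statistically independent, the $X$-dependence cancels in the product and the conjectured asymptotic emerges.

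The main obstacle is precisely justifying this splitting and the independence of the two factors; both are hypotheses of the hybrid model rather than theorems, and making them rigorous would essentially amount to proving the conjecture. Empirical support comes from the two rigorously proved cases, $k=1$ (Hardy--Littlewood) and $k=2$ (Ingham), where the predicted constants $a(1)G^2(2)G(3)^{-1}$ and $a(2)G^2(3)G(5)^{-1}$ reproduce the classical leading terms, as well as from numerical evidence at other values of $k$ and from consistency with the CFKRS recipe. The same philosophy, applied to the derivative of $Z(\theta,A)$ at an eigenangle in place of $Z$ itself, is what motivates Conjecture~\ref{conj:n1} of the present paper.
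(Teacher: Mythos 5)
Your derivation follows the same heuristic route that the paper itself outlines: the Selberg-integral computation of $\E_N[|Z(\theta,A)|^{2k}]$ with the calibration $N=\log(T/2\pi)$ produces the $G$-function factor and the power of $\log(T/2\pi)$, and the arithmetic factor $a(k)$ is then supplied (as the paper notes, originally \emph{ad hoc}, later explained) by the Gonek--Hughes--Keating hybrid decomposition $\zeta \approx P_X Z_X$ together with the splitting conjecture. This is exactly the philosophy the paper attributes to Keating--Snaith and GHK in Section~2, so the proposal matches the paper's treatment of this conjectural statement.
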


The Keating and Snaith result is for a continuous moment, but random matrix theory has also been used to predict discrete moments of $\zeta (s)$. Hughes, Keating and O'Connell~\cite{HKO00} followed a similar approach and calculated the discrete moments of $|Z'(\theta,A)|$ (where the prime denotes differentiation with respect to $\theta$) and found that for $\Re(k)>-3/2$, as $N\to\infty$
\begin{equation}\label{eq:HKOresult}
\mathbb{E}_N \left[ \frac{1}{N} \sum_{n=1}^N |Z'(\theta_n,A)|^{2k} \right] \sim \frac{G^2 (k+2)}{G(2k+3)} N^{k(k+2)}
\end{equation}
where the sum is taken over all the eigenangles of the matrix $A$.

As in the continuous case above, motivated by this result they made a conjecture for zeta which agreed with all known and previously conjectured results and is believed to hold in general.
\begin{conjecture}[Hughes--Keating--O'Connell]
For any fixed $k$ with $\Re (k) > -3/2$,
\begin{equation*}
    \frac{1}{N(T)} \sum_{0 < \gamma \leq T} \left| \zeta' \left( \tfrac{1}{2} + i \gamma \right) \right| ^{2k} \sim a(k) \frac{G^2 (k+2)}{G(2k+3)} \left( \log \frac{T}{2\pi} \right)^{k(k+2)},
\end{equation*}
where $a(k)$ is the same arithmetic factor as in the previous conjecture, given in \eqref{AF}.
\end{conjecture}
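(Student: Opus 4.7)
The plan is to follow the hybrid--product philosophy of Gonek, Hughes and Keating referenced in the introduction. The starting point is the random matrix result \eqref{eq:HKOresult} together with the splitting $\zeta(s) = P_X(s)\,Z_X(s)$, where $P_X(s)$ is a smoothed partial Euler product over primes $p \le X$ and $Z_X(s)$ is a smoothed partial Hadamard product over non-trivial zeros lying within distance $O(1/\log X)$ of $s$. Differentiating this factorisation at a zero $\rho = \tfrac{1}{2} + i\gamma$ and using $Z_X(\rho)=0$, the Leibniz rule collapses to $\zeta'(\rho) = P_X(\rho)\,Z_X'(\rho)$, so that
\begin{equation*}
\bigl|\zeta'(\tfrac{1}{2}+i\gamma)\bigr|^{2k} = \bigl|P_X(\tfrac{1}{2}+i\gamma)\bigr|^{2k}\,\bigl|Z_X'(\tfrac{1}{2}+i\gamma)\bigr|^{2k}.
\end{equation*}

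The next step is to argue that the resulting sum over zeros splits asymptotically as a product of two averages. For the Euler product factor, the average of $|P_X(\tfrac{1}{2}+i\gamma)|^{2k}$ over zeros $0 < \gamma \le T$ should match, up to negligible error, the continuous average of $|P_X(\tfrac{1}{2}+it)|^{2k}$ over $t \in [0,T]$; a standard multiplicative-function expansion of $|P_X|^{2k}$ combined with the prime number theorem then produces, after sending $X \to \infty$, exactly the arithmetic constant $a(k)$ in \eqref{AF}. For the zero factor, one models $Z_X$ by the characteristic polynomial of a Haar-distributed $N \times N$ unitary matrix under the identification $N = \log(T/2\pi)$, which equates the local zero density to the eigenangle density; the HKO theorem \eqref{eq:HKOresult} then supplies the factor $\frac{G^2(k+2)}{G(2k+3)}\bigl(\log T/2\pi\bigr)^{k(k+2)}$. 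Multiplying the two contributions yields the conjectured asymptotic.

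The main obstacle, and the reason the statement remains a conjecture rather than a theorem, is the rigorous justification of the decoupling step. One must show that the arithmetic fluctuations coming from the partial Euler product and the local fluctuations coming from the partial Hadamard product are asymptotically independent in the joint limit $X, T \to \infty$, and that the error incurred in replacing $\zeta'(\rho)$ by the truncated product $P_X(\rho)\,Z_X'(\rho)$ remains uniformly controllable after raising to a $2k$-th power (notably for negative real parts of $k$, where small values of $|\zeta'(\rho)|$ are dangerous and rely on a quantitative form of the simplicity of zeros). Both ingredients are expected but require independence-type and lower-bound statements about primes and zeros at discrete heights $\gamma$ that are currently out of reach; this is why the conjecture is formulated by analogy with \eqref{eq:HKOresult} rather than derived from it.
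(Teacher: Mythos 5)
This statement is a conjecture quoted from Hughes--Keating--O'Connell \cite{HKO00}, so the paper does not prove it --- it only recounts how the conjecture was arrived at. Your heuristic is a correct modern justification, but it is not the historical route, and the paper is explicit about this distinction in Section~\ref{sect:PreviousRMTresults}: HKO first computed the random matrix average \eqref{eq:HKOresult}, and the arithmetic factor $a(k)$ was then inserted \emph{ad hoc} by matching against known and previously conjectured special cases, not derived from the Euler product. Only later did Gonek--Hughes--Keating \cite{GHK07} and Bui--Gonek--Milinovich \cite{BGM13} show that the hybrid factorisation $\zeta = P_X Z_X$ (and its differentiated form $\zeta'(\rho) = P_X(\rho) Z_X'(\rho)$) together with the splitting conjecture produce $a(k)$ naturally --- which is the argument you have sketched. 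So you have given the \emph{a posteriori} explanation rather than the original derivation; the gain is that it explains why the same $a(k)$ appears in both the continuous and discrete conjectures, which the ad hoc route does not.

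One bookkeeping point worth tightening: it is not quite true that the $P_X$ average contributes ``exactly $a(k)$'' after sending $X \to \infty$ while the $Z_X'$ average independently supplies the full power $(\log \tfrac{T}{2\pi})^{k(k+2)}$. As recalled in Section~\ref{sect:PX}, BGM's Theorem~2.2 gives
\begin{equation*}
\frac{1}{N(T)} \sum_{0 < \gamma \leq T} \left\lvert P_X(\rho)\right\rvert^{2k} \sim a(k)\bigl(e^{\gamma_0}\log X\bigr)^{k^2},
\end{equation*}
carrying an explicit $(\log X)^{k^2}$; the corresponding $|Z_X'|^{2k}$ moment carries a compensating $\bigl(e^{\gamma_0}\log X\bigr)^{-k^2}$ so that the product of the two reproduces $a(k)\frac{G^2(k+2)}{G(2k+3)}\bigl(\log\tfrac{T}{2\pi}\bigr)^{k(k+2)}$ independently of $X$. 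Your invocation of \eqref{eq:HKOresult} directly, which concerns the full characteristic polynomial rather than the truncated $Z_{N,X}$, silently absorbs this cancellation; it would be cleaner to say that the truncated zero factor carries the $\log X$-dependent complement. Your diagnosis of why the decoupling cannot currently be made rigorous --- the splitting conjecture, and the lower-bound problem for negative $\Re(k)$ --- is accurate and is precisely the reason this remains a conjecture.
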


What wasn't clear at the time was why the same arithmetic factor \eqref{AF} should appear in these two conjectures. Indeed, the arithmetic factor itself was added in an \textit{ad hoc} manner after the random matrix theory calculations were computed. This additional factor has been shown to be included naturally in the hybrid model developed by Gonek, Hughes and Keating \cite{GHK07} for continuous moments and of Bui, Gonek and Milinovich \cite{BGM13} for discrete moments. This hybrid approach gives equal weighting to $Z_X$, a partial Hadamard product over the zeros of zeta, and $P_X$, a partial Euler product over the primes; it is stated exactly in Theorem~\ref{thm:zeta as product} in the next  section. One strength of the hybrid approach is that it explained why the same arithmetic factor should appear in both conjectures.

\section{Results justifying the formulation of Conjecture \ref{conj:n1}}\label{sect:RMTresults}
The previous two problems where random matrix theory was used both concerned moments of a real function, where an absolute value of zeta was taken. For our problem, we consider the complex moments of the complex function. In this paper we attack our problem in two ways. First, by direct calculation with the characteristic polynomial using Selberg's integral, and then by modelling zeta using the hybrid approach. For the first moment, $k=1$, we will prove that hybrid methodology gives the correct result. 

First, in Section~\ref{sect:Selberg} we prove the following result using Selberg's integral.
\begin{theorem}\label{thm:RMTn1}
Let $Z(\theta,A)$ be as given in \eqref{ZTheta}, and let $\theta_n$ denote the eigenangles of an $N\times N$ unitary matrix. Then for $\Re(k)>-3$ we have
\begin{align*}
\E_N\left[ \frac{1}{N} \sum_{n=1}^N  Z'(\theta_n,A)^k \right] &= e^{i \pi k /2} \frac{\Gamma(N+k+1)}{ N! \Gamma(k+2)}\\
&\sim  \frac{e^{i \pi k /2}}{\Gamma(k +2)} N^{k}
\end{align*}
as $N\to\infty$. 
\end{theorem}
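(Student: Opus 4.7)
The plan is to evaluate the expectation directly via the Weyl integration formula, reducing it to a trigonometric Selberg integral (in the form due to Morris), and then read off the answer.

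First I would use exchangeability: the sum $\sum_n Z'(\theta_n,A)^k$ is symmetric in the eigenangles and the Weyl density is permutation-symmetric, so the average equals $\E_N[Z'(\theta_1,A)^k]$. Computing the derivative at an eigenangle gives $Z'(\theta_1,A) = i\prod_{j\geq 2}(1-e^{i(\theta_j-\theta_1)})$, which depends on the $\theta$'s only through the differences $\theta_j-\theta_1$. Rotational invariance of Haar measure (i.e.\ $A\mapsto e^{i\alpha}A$) then lets me fix $\theta_1=0$ and integrate the remaining Weyl density in the $N-1$ variables $\phi_j=\theta_{j+1}$, absorbing a factor of $2\pi$.

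Next I would set the branch. For $\phi\in(0,2\pi)$, write $1-e^{i\phi} = -2ie^{i\phi/2}\sin(\phi/2)$ and raise each factor to the $k$ using this representation; this is the branch of $Z'(0,A)^k$ compatible with the Weierstrass-type product structure (and is the branch promised in Section~\ref{sect:RMTresults}). A short computation then yields the key identity
\[
(1-e^{i\phi})^k\,|1-e^{i\phi}|^2 = (1-e^{i\phi})^{k+1}(1-e^{-i\phi}),
\]
so that absorbing the factors $|1-e^{i\phi_j}|^2$ coming out of the Vandermonde after fixing $\theta_1=0$ converts the expectation into Morris's form
\[
\E_N[Z'(\theta_1,A)^k] = \frac{i^k}{N!\,(2\pi)^{N-1}} \int_{[0,2\pi]^{N-1}} \prod_{j=1}^{N-1}(1-e^{i\phi_j})^{k+1}(1-e^{-i\phi_j}) \prod_{j<\ell}|e^{i\phi_j}-e^{i\phi_\ell}|^2 \,d\phi.
\]

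Finally I would apply Morris's formula with $(a,b,c,n)=(k+1,1,1,N-1)$. After cancelling the $\Gamma(j+2)$'s and using $\Gamma(2)=1$, the product telescopes to $\Gamma(N+k+1)/\Gamma(k+2)$; together with the prefactor $i^k/N! = e^{i\pi k/2}/N!$ this gives the exact formula in the theorem, and Stirling yields $\Gamma(N+k+1)/N!\sim N^k$. The most delicate point is branch bookkeeping in the strip $-3<\Re(k)\leq -2$: Morris's classical convergence condition reads $\Re(k+1)>-1$, i.e.\ $\Re(k)>-2$, and fails there. However the cancellation between $(1-e^{i\phi_j})^{k+1}$ and $(1-e^{-i\phi_j})$ leaves only a $\phi_j^{k+2}$ singularity at the origin, so the integral converges and both sides of the claimed formula are holomorphic on $\Re(k)>-3$; analytic continuation then extends the identity across this wider strip.
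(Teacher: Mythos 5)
Your proof is correct, and it takes a genuinely different (shorter) route from the paper's. Both proofs begin identically: exchangeability plus rotation invariance of Haar measure reduces $\E_N\bigl[\frac{1}{N}\sum_n Z'(\theta_n,A)^k\bigr]$ to an $(N-1)$-dimensional Weyl integral in which the distinguished eigenangle has been set to zero, with the linear factors coming out of the Vandermonde recombined with $Z'$ into $|1-e^{i\phi_j}|^2(1-e^{i\phi_j})^k = (1-e^{i\phi_j})^{k+1}(1-e^{-i\phi_j})$. From there the paper follows the Keating--Snaith chain of substitutions (half-angle, the $\sin(\theta_\ell-\theta_j)=\sin\theta_\ell\sin\theta_j(\cot\theta_\ell-\cot\theta_j)$ identity, then $x_n=\cot\theta_n$) to transport the integral to the rational Selberg form $J(1,1,k+N,N,1,N-1)$ over $\RR^{N-1}$, quoting Mehta. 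You instead evaluate the circular integral directly via Morris's trigonometric form of Selberg's integral with parameters $(a,b,c,n)=(k+1,1,1,N-1)$. The products telescope to $\Gamma(N+k+1)/\Gamma(k+2)$ in both cases, and the $i^k=e^{i\pi k/2}$ prefactor and $1/N!$ match. What your route buys is economy: you skip the change-of-variables gymnastics entirely. What the paper's route buys is that the convergence range $\Re(k)>-3$ is literally read off the hypotheses of the rational Selberg identity (indeed the paper has to correct a misprint in Mehta's stated range in a footnote). Your convergence discussion is slightly over-cautious: in standard statements of Morris's integral with independent exponents $\alpha,\beta$ at $z=1$ and $z=-1$ (equivalently, at $\phi=0$ after your rotation), the convergence condition is $\Re(\alpha+\beta)>-1$, which here is $\Re(k+2)>-1$, i.e.\ $\Re(k)>-3$ directly; no separate analytic continuation step is needed, though the continuation argument you give is also valid and harmless.
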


However, in order to use this to create a conjecture for zeta, we need to know what the arithmetic factor equals. To do that, we take the hybrid approach, mentioned above.

In 2007 Gonek, Hughes and Keating \cite{GHK07}, proved a hybrid formula for the zeta function, expressing it as a partial product over primes times a rapidly decaying product over non-trivial zeros. We state a result with slightly more control over the smoothing  (in  \cite{GHK07} and other later papers, $X=Y$ is taken).
\begin{theorem}\label{thm:zeta as product}
     Let $s=\sigma+\ii t$ with $\sigma \geq 0$ and
     $ |t|\geq 2$, let $X,Y \geq 2$ be real parameters, and let $K$ be any fixed
     positive integer.
     Let $f(x)$ be a nonnegative $C^{\infty}$ function of mass 1 supported on $[0, 1]$ and set $u(x)=Y f(Y\log\frac{x}{e}+1)/x$.
     Thus, $u(x)$ is a
     function of mass 1 supported on $[e^{1-1/Y}, e]$.  Set
\begin{equation}\label{U definition}
U(z) = \int_1^e u(y) E_1(z\log y)\;\dd y \,,
\end{equation}
where $E_{1}(z)$ is the exponential integral $\int_{z}^{\infty}
e^{-w}/w\;\dd w $. Then
\begin{equation*}
    \zeta(s) = P_{X}(s) Z_{X}(s)  \left(1+ O\left(\frac{Y^{K+1} X^{\max(1-\sigma,0)}}{(|s|
    \log X)^{K}} \right) + O\left(\frac{X^{1-\sigma}\log X}{Y}\right) \right)\;,
\end{equation*}
where
\begin{equation}\label{eq:PX}
     P_{X}(s) =
   \exp\left(\sum_{n \leq X} \frac{\Lambda(n)}{\log n}\frac{1}{n^{s}} \right)\,,
\end{equation}
$\Lambda(n)$ is von Mangoldt's function, and
\begin{equation}\label{eq:ZX}
   Z_{X}(s) = \exp\left(-\sum_{\rho_n}U\big((s-\rho_n)\log
    X\big)\right)\,.
\end{equation}
The constants implied by the $O$ terms depend only on $f$ and $K$.
\end{theorem}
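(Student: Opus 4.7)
The plan is to adapt the proof of Theorem~1 in~\cite{GHK07}, keeping the smoothing parameter $Y$ independent of the truncation parameter $X$. The argument is a smoothed Perron--Mellin identity for $-\zeta'/\zeta$ followed by a contour shift that simultaneously produces the prime sum defining $\log P_{X}$ and the residues defining $\log Z_{X}$, with an integration in $s$ at the end to pass from $\zeta'/\zeta$ to $\log\zeta$.

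Concretely, I would introduce the Mellin-type transform $\check{u}(z) = \int_{1}^{e} u(y)\, y^{-z}\,\mathrm{d}y$. Since $u$ has mass~$1$, $\check{u}(0)=1$, and the change of variables $t = Y\log(y/e)+1$ together with $K$-fold integration by parts (using $f^{(j)}(0)=f^{(j)}(1)=0$) yields the key decay estimate $\check{u}(z)\ll_{K,f} \bigl(Y/(1+|z|)\bigr)^{K}$. For $\sigma>1$, Mellin inversion applied to $-\zeta'/\zeta(s)=\sum_{n}\Lambda(n)n^{-s}$ gives
\begin{equation*}
\frac{1}{2\pi\ii}\int_{(c)} \left(-\frac{\zeta'}{\zeta}(s+w)\right)\check{u}(w)\,X^{w}\,\frac{\mathrm{d}w}{w} \;=\; \sum_{n} \frac{\Lambda(n)}{n^{s}}\,\Phi_{X,Y}(n),
\end{equation*}
where $\Phi_{X,Y}$ is a smooth cut-off built from $u$ that agrees with $\mathbf{1}_{n\leq X}$ outside a logarithmic window of width $\asymp 1/Y$ near $n=X$. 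I would then shift the contour past $w=0$, $w=1-s$, and each $w=\rho-s$, collecting the residues: $-\zeta'/\zeta(s)$ from $w=0$, an explicit contribution from the pole of $\zeta$, and $-\check{u}(\rho-s)X^{\rho-s}/(\rho-s)$ from each nontrivial zero. Antidifferentiating in $s$ replaces $\zeta'/\zeta(s)$ by $\log\zeta(s)$, and using $E_{1}(z)=\int_{1}^{\infty} e^{-zt}/t\,\mathrm{d}t$ with Fubini identifies the antiderivative of the zero-residues with $-\sum_{\rho} U((s-\rho)\log X)$, exactly $\log Z_{X}(s)$ from~\eqref{eq:ZX}. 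Approximating $\Phi_{X,Y}$ by the sharp indicator and exponentiating then produces the stated factorization.

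The main obstacle is the bookkeeping of the two error terms. The first, $Y^{K+1}X^{\max(1-\sigma,0)}/(|s|\log X)^{K}$, bounds the tail of the shifted contour integral after integration in~$s$: the decay $\check{u}(w)\ll (Y/|w|)^{K}$ supplies $Y^{K}$, $|X^{w}|$ on the shifted line contributes $X^{\max(1-\sigma,0)}$, the change of variable $w\mapsto w\log X$ together with standard convexity estimates for $\zeta'/\zeta$ on vertical lines produces $(|s|\log X)^{-K}$, and the remaining factor of $Y$ reflects the cost of antidifferentiating in~$s$. The second error, $X^{1-\sigma}\log X/Y$, measures the discrepancy between $\Phi_{X,Y}$ and $\mathbf{1}_{n\leq X}$ over the $\asymp 1/Y$-wide transition near $n=X$, and is controlled by the Chebyshev-type bound $\sum_{n\asymp X}\Lambda(n)/n^{\sigma}\cdot(1/Y)\ll X^{1-\sigma}\log X/Y$. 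This accounting is only mildly more delicate than in~\cite{GHK07}, where the choice $X=Y$ collapsed the two errors into one.
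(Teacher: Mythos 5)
Since the paper offers no proof beyond the remark ``The proof of this result follows precisely the method in \cite{GHK07}, with obvious changes for the different smoothing,'' your plan --- adapting the GHK07 argument while decoupling the smoothing scale $Y$ from the truncation $X$ --- is exactly the route the authors intend, and your outline (decay of the Mellin-type transform of $u$ via repeated integration by parts, a smoothed Perron formula for $-\zeta'/\zeta$, a contour shift picking up the pole at $w=1-s$ and the zeros at $w=\rho-s$, then an antidifferentiation in $s$ to pass to $\log\zeta$ and identify the residue contributions with $U$ via $E_1(z)=\int_1^\infty e^{-zt}\,\mathrm{d}t/t$) is the right skeleton.

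One technical point needs fixing in the Perron step, and it is not purely cosmetic. With the kernel you wrote, $\check{u}(w)X^{w}/w$, the inverse Mellin transform is $\Phi_{X,Y}(n)=\int_{1}^{X/n}u(y)\,\mathrm{d}y$, which transitions near $n\approx X/e$ rather than $n\approx X$, so it does not match the cutoff $n\leq X$ in $\log P_{X}$; and the residue at $w=\rho-s$ antidifferentiates to $\int_{1}^{e}u(y)\,E_{1}\bigl((s-\rho)(\log X-\log y)\bigr)\,\mathrm{d}y$, not to $U\bigl((s-\rho)\log X\bigr)=\int_{1}^{e}u(y)\,E_{1}\bigl((s-\rho)\log X\log y\bigr)\,\mathrm{d}y$ as required by \eqref{eq:ZX}. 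The kernel should have its argument rescaled by $\log X$ --- something of the form $\check{u}(-w\log X)/w$ --- since one checks directly that $\frac{\mathrm{d}}{\mathrm{d}s}U\bigl((s-\rho)\log X\bigr)=-\check{u}\bigl((s-\rho)\log X\bigr)/(s-\rho)$, forcing that shape of residue. This rescaling is also precisely what turns the decay bound $\check{u}(z)\ll(Y/|z|)^{K}$ into the $(\,|s|\log X\,)^{-K}$ appearing in the first error term, and what moves the cutoff to $n\approx X$. You clearly anticipate some $w\mapsto w\log X$ rescaling in the error analysis, so the gap is likely notational, but as written the kernel does not reproduce the stated $Z_{X}$, $P_{X}$, or error terms, and the identification step you describe would fail.
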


\begin{remark}
    The proof of this result follows precisely the method in \cite{GHK07}, with obvious changes for the different smoothing.
\end{remark}

Just as in work of Bui, Gonek and Milinovich \cite{BGM13}, we can differentiate whilst still obtaining the asymptotic,  obtaining under the Riemann Hypothesis
\begin{equation}\label{eq:HybridZetaPrimeRho}
\zeta'(\rho) =  P_X(\rho) Z_X'(\rho) \left(1+O\left(\frac{Y^{K+1} X^{1/2}}{(|\rho| \log X)^K} \right) + O\left(\frac{X^{1/2}\log X}{Y} \right) \right)
\end{equation}

Note that $Z_X'(\rho)$ can be expressed as a product
\[
Z'_X(\rho) = C \prod_{\rho_n\neq\rho} \exp\left(-\sum_{\rho_n}U\big((\rho-\rho_n)\log
    X\big)\right)
\]
for $C$ some positive constant. The product structure informs our choice of branch, needed for when the moment $k$ is not an integer, which is to let $U\big(\sigma+(\rho-\rho_n)\log X\big)$ vary continuously for $\sigma>0$ and tending to zero as $\sigma\to+\infty$. This is the equivalent of the choices we make in the random matrix theorems in Sections~\ref{sect:Selberg} and \ref{sect:ZX}.

In Section~\ref{sect:ZX} we model the discrete moments of $Z_X(\rho)$ from the hybrid model by random matrix theory. The appropriate random matrix equivalent for $Z_X(s)$ is
\begin{equation}\label{eq:ZNX}
   Z_{N,X}(\theta,A) = \prod_{n=1}^N\exp\left(-\sum_{j=-\infty}^\infty U\big(\ii(\theta-\theta_n + 2\pi j)\log X\big)\right)
\end{equation}
where the $N\times N$ unitary matrix has eigenvalues $e^{i\theta_n}$ and $U$ is given in \eqref{U definition}.

\begin{remark}
    The sum over $j$ makes the arguments $2\pi$-periodic, thus permitting the move from $e^{i\theta}$ to $\theta$ without any branch-cut ambiguities. The decay of $U$ for large inputs means that the infinite sum over $j$ is absolutely summable.
\end{remark}

In Section~\ref{sect:ZX} we prove the following result for the modelled version of $Z_X'(\rho)$. 
\begin{theorem} \label{thm:Z_X}
    Let $Z_{N,X}(\theta,A)$ be given in \eqref{eq:ZNX}, and let $\theta_n$ denote the eignenangles of an $N\times N$ unitary matrix $A$. If $Re(k) >-3$ then
    \[
    \E_N\left[\frac{1}{N} \sum_{n=1}^N Z_{N,X}'(\theta_n,A)^k\right] \sim  \frac{e^{\ii k \pi/2}}{\Gamma(k+2)} N^k
    \]
    as $N\to \infty$.
\end{theorem}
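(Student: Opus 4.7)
My plan is to mirror the proof of Theorem~\ref{thm:RMTn1}, but with Selberg's integral replaced by a Fisher--Hartwig asymptotic for a Toeplitz determinant. By Haar symmetry, $\E_N[\frac{1}{N}\sum_m Z'_{N,X}(\theta_m,A)^k] = \E_N[Z'_{N,X}(\theta_1,A)^k]$. Setting $W(\psi) := \sum_{j\in\mathbb{Z}} U(\ii(\psi+2\pi j)\log X)$ so that $Z_{N,X}(\theta,A) = \prod_n e^{-W(\theta-\theta_n)}$, the local expansion $E_1(z) = -\gamma - \log z + O(z)$ applied to the $j=0$ term in $W$ yields $e^{-W(\psi)} = (1 - e^{-\ii\psi})\,h_X(\psi)$ for a smooth nowhere-vanishing $h_X$ with $h_X(0) = \log X \cdot e^{\gamma + c_u - \tilde W(0)}$, where $c_u = \int_1^e u(y)\log\log y\,\dd y$ and $\tilde W(0) = \sum_{j\neq 0} U(2\pi\ii j \log X)$. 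Differentiating $Z_{N,X}$ at $\theta_1$ (where only the $n=1$ factor vanishes), then applying Weyl's integration formula and the shift $\phi_n = \theta_n - \theta_1$, produces
\[
\E_N[Z'_{N,X}(\theta_1,A)^k] = \frac{(\ii h_X(0))^k}{N}\det T_{N-1}(F_X), \qquad F_X(\phi) = e^{-kW(-\phi)}\,|1-e^{\ii\phi}|^2.
\]

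The symbol $F_X$ has a single Fisher--Hartwig singularity at $\phi = 0$: a local expansion gives $F_X(\phi) \sim C\,|\phi|^{k+2}\, e^{-\ii k\pi\sgn(\phi)/2}$, so the Fisher--Hartwig parameters are $\alpha = (k+2)/2$ and $\beta = k/2$, and $\alpha^2-\beta^2 = k+1$. Factoring out the canonical piece $\omega_{\alpha,\beta}(\phi) = (1-e^{\ii\phi})^{k+1}(1-e^{-\ii\phi})$ leaves a smooth factor $\phi_X = F_X/\omega_{\alpha,\beta}$ with $\log\phi_X(\phi) = -kW(-\phi) - k\log(1-e^{\ii\phi})$. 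A crucial input is that the Fourier transform of $s\mapsto E_1(\ii s)$ is supported on $(-\infty,-1]$: using $E_1(\ii s) = \int_1^\infty e^{-\ii st}/t\,\dd t$ and a regularized Fubini exchange, it equals $-2\pi/q$ for $q\leq -1$ and vanishes otherwise. Hence $W$ has only nonpositive Fourier modes, $W(-\phi)$ only nonnegative ones, and since $\log(1-e^{\ii\phi}) = -\sum_{m\geq 1} e^{\ii m\phi}/m$ also has only nonnegative modes, $\log\phi_X$ extends analytically to the open unit disk. This yields the geometric mean $G(\phi_X) = 1$, the Szeg\H{o} constant $E(\phi_X) = 1$, and the Wiener--Hopf factorization $\phi_X = b_+$ with $b_-\equiv 1$ and $b_+(1) = \phi_X(0) = h_X(0)^k$.

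Applying the Fisher--Hartwig asymptotic for a single singularity (Widom; Deift--Its--Krasovsky), together with the Barnes identity $G(k+2)G(2)/G(k+3) = 1/\Gamma(k+2)$ and the boundary contribution from $b_\pm(1)$, a careful bookkeeping should give $\det T_{N-1}(F_X) \sim N^{k+1}/(h_X(0)^k\,\Gamma(k+2))$; the $h_X(0)^k$ then cancels the explicit prefactor to produce
\[
\E_N[Z'_{N,X}(\theta_1,A)^k] \sim \frac{\ii^k\,N^k}{\Gamma(k+2)} = \frac{e^{\ii k\pi/2}}{\Gamma(k+2)}\,N^k,
\]
as claimed. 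The main obstacle is the precise matching of constants: the Fisher--Hartwig asymptotic involves several $\phi_X$-dependent contributions (Szeg\H{o} constant, Wiener--Hopf boundary values, phase factors), and all $X$-dependence must cancel exactly so that the leading term is universal. This cancellation is forced by the analytic structure of $\log\phi_X$ inside the unit disk, but a fully rigorous deduction needs a version of the Fisher--Hartwig formula for complex-valued smooth factors analytic in the disk, together with an explicit identification of the boundary contribution in terms of $h_X(0)^k$.
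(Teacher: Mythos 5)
Your proposal is correct and follows essentially the same route as the paper: factor out the $(1-e^{-\ii\psi})$ singularity, reduce the expectation to a Toeplitz determinant via Weyl's integration formula and Heine's identity, identify the Fisher--Hartwig parameters (your $\alpha=(k+2)/2$, $\beta=k/2$ agree with the paper's Ehrhardt--Silbermann parameters $\gamma=k+1$, $\delta=1$), and use the Fourier-support computation for $E_1(\ii s)$ to show $\log\phi_X$ has only positive modes, so that all the smooth-factor constants collapse to $\phi_X(0)^{-1}=h_X(0)^{-k}$ and cancel the prefactor. The paper avoids your final bookkeeping worry by quoting the Ehrhardt--Silbermann asymptotic directly, whose constant $C_1$ is given explicitly in terms of the Fourier coefficients $s_m$, making the cancellation against $e^{kF_X(0)}=\exp(\sum_{m\geq1}s_m)$ immediate.
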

Comparing the results of this theorem with Theorem~\ref{thm:RMTn1} shows that the product over zeros in the hybrid model is predicted to  have the same leading order asymptotics as the characteristic polynomial.

Finally, in Section~\ref{sect:PX}, we calculate the discrete moments of $P_X(\rho)$ and show that the mean is 1. In particular, we are able to prove the following moment result for $P_X$.
\begin{theorem}\label{thm:PXComplex}
Let $P_X(s)$ be given by \eqref{eq:PX}. Under the Riemann Hypothesis, for $X>2$ with $X=O(\log T)$, for $k\in\CC$ a fixed complex number,
\[
\sum_{0 < \gamma \leq T} P_X(\rho)^k = N(T) + O(T\log\log T)
\]
as $T\to\infty$.
\end{theorem}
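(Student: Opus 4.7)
The plan is to write $P_X(\rho)^k$ as an absolutely convergent Dirichlet series on the critical line, interchange with the sum over $\gamma$, and apply a quantitative form of Landau's formula for $\sum_\gamma n^{-\rho}$.

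First, I would expand $P_X(s)^k = \sum_n a_k(n)/n^s$ with $a_k$ multiplicative, supported on $X$-smooth integers, and $a_k(1)=1$, directly from the Euler-product form of \eqref{eq:PX}. A coefficient-wise comparison with $(1 - p^{-s})^{-k} = \sum_m \binom{k+m-1}{m}p^{-ms}$, using nonnegativity of the Taylor coefficients of $-\log(1-y) - \sum_{j:\,p^j \leq X} y^j/j$, yields the bound $|a_k(n)| \leq d_{|k|}(n)$. Consequently $\sum_n |a_k(n)|/n^\sigma \leq \prod_{p \leq X}(1 - p^{-\sigma})^{-|k|}$ for any $\sigma > 0$, and under the hypothesis $X = O(\log T)$ this Euler product is $T^{o(1)}$ even on the critical line $\sigma = 1/2$. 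Thus the Dirichlet series converges absolutely at every $\rho$, and by Fubini
\[
\sum_{0 < \gamma \leq T} P_X(\rho)^k = N(T) + \sum_{n \geq 2} a_k(n) \sum_{0 < \gamma \leq T} n^{-\rho}.
\]

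For the inner sum I would invoke an effective version of Landau's formula under RH, $\sum_{0<\gamma\leq T} n^{-\rho} = -T\Lambda(n)/(2\pi n) + O(\log n \cdot \log T)$ for integers $2 \leq n \leq T$, with the range $n > T$ handled by the trivial bound $|\sum_\gamma n^{-\rho}| \leq N(T)/\sqrt n$. Summing the Landau main term against $a_k(n)$ collapses back into the Euler product to
\[
-\frac{T}{2\pi}\sum_{p \leq X}\log p\,(e^{kS_p} - 1), \qquad S_p = \sum_{j:\,p^j \leq X}\frac{1}{j p^j},
\]
and since $|e^{kS_p} - 1| \ll |k|/p$ uniformly in $p$, Mertens's theorem gives $O(T\log X) = O(T\log\log T)$ using $X = O(\log T)$, which is the claimed size.

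The last step is to show the two error pieces are both $o(T)$: the Landau error summed against $|a_k(n)|$ for $n \leq T$ is $\ll \log^2 T \cdot \sum_{n \leq T}|a_k(n)| \ll T^{1/2+o(1)}\log^2 T$ via the pointwise comparison $|a_k(n)| \leq T^{1/2}|a_k(n)|/\sqrt n$ and the Euler-product bound $\sum_n |a_k(n)|/\sqrt n = T^{o(1)}$; and the $n > T$ tail is $\ll N(T)\sum_{n > T}|a_k(n)|/\sqrt n \ll N(T) T^{-\delta + o(1)} = o(T)$ by Rankin's trick at abscissa $1/2 - \delta$ for any fixed small $\delta > 0$. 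The main technical obstacle is precisely this uniform error control: one needs the Dirichlet series for $P_X^k$ to behave essentially like a bounded series on the critical line uniformly in $T$, and it is exactly the hypothesis $X = O(\log T)$ that keeps the relevant Euler products of size $T^{o(1)}$ and both error terms $o(T)$.
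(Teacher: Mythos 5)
Your proof is correct but takes a genuinely different route from the paper's. The paper first truncates $P_X(\rho)^k$ to a finite Dirichlet polynomial of length $T^{\theta}$ using Lemma~\ref{lem:TruncatePX} (taken from Gonek--Hughes--Keating), then applies Landau's formula to that polynomial, and finally bounds $\sum_{m\le T^{\theta}}|a_k(m)|$ by $\sum_{m\ge1}|a_k(m)|\le P_X(0)^{|k|}=\exp\bigl(O(X/\log X)\bigr)=T^{o(1)}$. You instead work directly with the full infinite Dirichlet series: you observe that under $X=O(\log T)$ it converges absolutely on the critical line with $\sum_n|a_k(n)|n^{-1/2}\le\prod_{p\le X}(1-p^{-1/2})^{-|k|}=T^{o(1)}$, swap the (finite) sum over $\gamma$ with the sum over $n$, apply Landau's formula for $n\le T$, and dispose of the tail $n>T$ with the trivial bound $|\sum_\gamma n^{-\rho}|\le N(T)/\sqrt n$ plus Rankin's trick. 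Both routes produce the same secondary main term: the Landau contributions collapse to $-\frac{T}{2\pi}\sum_{p\le X}\log p\,(e^{kS_p}-1)\ll T\sum_{p\le X}\tfrac{\log p}{p}\ll T\log X=O(T\log\log T)$, which is the stated error. Your approach avoids importing the GHK truncation lemma and makes it explicit that $X=O(\log T)$ is used to keep the relevant Euler products $T^{o(1)}$; the paper's truncation method (inherited from \cite{GHK07} and \cite{BGM13}) is more flexible and would tolerate $X$ up to $(\log T)^{2-\varepsilon}$. One small caveat: the error $O(\log n\cdot\log T)$ you quote for Landau's formula is not the form in Gonek's result (Lemma~\ref{lem:LandauGonek}), whose error is $O(\log(2nT)\log\log(3n))$; your stated bound is weaker for $2\le n\le T$ and so follows from Gonek's, but you should cite the precise version rather than a paraphrase.
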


It is believed that $P_X(s)$ and $Z_X(s)$ operate pseudo-independently (this is the so-called ``Splitting Conjecture'' of Gonek--Hughes--Keating~\cite{GHK07}), and so the moments of zeta are products of moments of $P_X$ and $Z_X$. The same heuristic applies for discrete moments of $P_X(\rho)$ and $Z_X'(\rho)$ (as discussed in \cite{BGM13}), so multiplying the moments from Theorem \ref{thm:Z_X} and Theorem \ref{thm:PXComplex} together enables us to conjecture that there is no arithmetic term at the leading order for the moments of the complex Riemann zeta function. This is the key difference to the moments when the absolute value of $\zeta'(\rho)$ is taken (as discussed in Section~\ref{sect:PreviousRMTresults}). 

We justify the claims made in the previous paragraph in the case $k=1$ in Theorem~\ref{thm:twist} in Section~\ref{sect:twisted} by applying a twisted first moment for $\zeta'(\rho)$ due to Benli--Elma--Ng~\cite{BEN23}.

Finally, we note that the derivative of $Z(\theta)$ is with respect to $\theta$, and this acts like the derivative of $\zeta(1/2+i t)$ with respect to $t$. This is obviously equal to $i \zeta'(1/2+i t)$, and so we do not expect there to be a factor of $i^k = e^{i \pi k / 2}$ for the Riemann zeta function.

After combining these observations, we form Conjecture \ref{conj:n1}.

\section{Complex moments of the derivative of the characteristic polynomial: Proof of Theorem~\ref{thm:RMTn1}}\label{sect:Selberg}

Differentiating with respect the $\theta$ the characteristic polynomial $Z(\theta,A)$, given in \eqref{ZTheta},  yields
\[
Z'(\theta_r,A) = i \prod_{\substack{n=1 \\ n \neq r}}^{N} \left(1-e^{i(\theta_n - \theta_r)}\right)
\]

The Haar probability density of $U(N)$ equals (see, for example, \cite{Weyl46})
\begin{equation}\label{eq:Weyl}
    d\mu_N = \frac{1}{N! (2 \pi)^N} \prod_{1 \leq j < \ell \leq N} |e^{i\theta_j} - e^{i\theta_\ell}|^2 \ d\theta_1 \ldots d\theta_N.
\end{equation}
so the moment that we wish to calculate is
\begin{multline*}
    \mathbb{E}_N \left[ \frac{1}{N} \sum_{r=1}^{N} Z'(\theta_r,A)^k   \right] \\
    = \frac{e^{i \pi k /2}}{N! (2 \pi)^N} \idotsint_{0}^{2\pi} \prod_{1 \leq j < \ell \leq N} |e^{i\theta_j} - e^{i\theta_\ell}|^2  \frac{1}{N} \sum_{r=1}^N \prod_{\substack{n=1 \\ n\neq r}}^{N} (1-e^{i(\theta_n-\theta_r)})^k \ d\theta_1 \dots d\theta_N
\end{multline*}
where we choose the branch so that $\arg(1-e^{-\varepsilon}e^{i(\theta_n-\theta_r)})$ varies continuously for $\varepsilon>0$, tending to $0$ as $\varepsilon \to \infty$, which is the same as insisting $-\pi/2 < \arg(1-e^{i(\theta_n-\theta_r)}) < \pi/2$. Similarly, where needed, we write $i=e^{i \pi /2}$ and $-i=e^{-i \pi / 2}$.

First note that there are no distinguished eigenvalues, so the average over all eigenanles $\theta_r$ is the same as just the $r=N$ term, say.  By rotation invariance of Haar measure changing variables to $\theta_m \mapsfrom \theta_m - \theta_N$ for $m=1,\dots,N-1$ makes the final $\theta_N$ integral trivial, and we have, after some simple manipulations,
\begin{multline*}
\mathbb{E}_N \left[ \frac{1}{N} \sum_{r=1}^{N} Z'(\theta_r,A)^k   \right]
\\=  \frac{e^{i \pi k /2}}{(2\pi)^{N-1} N!} \idotsint_0^{2\pi} \prod_{1 \leq j < \ell \leq N-1} \left|e^{i\theta_\ell} - e^{i\theta_j} \right|^2 \prod_{n=1}^{N-1}  \left|1-e^{i\theta_n}\right|^2 \left(1-e^{i\theta_n}\right)^k  d\theta_n
\end{multline*}

Using the identities
\begin{align*}
\left|e^{i\theta_\ell} - e^{i\theta_j} \right|^2 &= 4\sin\left(\tfrac12(\theta_\ell - \theta_j)\right)^2 \\
\intertext{and}\\
\left|1-e^{i\theta_n}\right|^2 \left(1-e^{i\theta_n}\right)^k  &= (- e^{i\pi /2})^k e^{i k \theta_n/2} 2^{k+2} \sin\left(\tfrac12 \theta_n\right)^{k+2}
\end{align*}
the expectation we desire equals
\begin{multline*}
e^{-i\pi k(N-2)/2} \frac{2^{(N-2)(N-1) + (k+2)(N-1)}}{(2\pi)^{N-1} N!} \\
\idotsint_0^{2\pi} \prod_{1 \leq j < \ell \leq N-1} \sin\left(\tfrac12(\theta_\ell - \theta_j)\right)^2  \prod_{n=1}^{N-1}  e^{i k \theta_n/2} \sin\left(\tfrac12 \theta_n\right)^{k+2}  d\theta_n .
\end{multline*}
We now follow the train of changes of variables utilised by Keating and Snaith in \cite{KS00a}, when they evaluated the complex moments of the absolute value of the characteristic polynomial. First we let $\theta_n/2 \mapsto \theta_n$ to obtain
\begin{multline*}
e^{-i\pi k(N-2)/2} \frac{2^{(N-2)(N-1) + (k+2)(N-1)+(N-1)}}{(2\pi)^{N-1} N!} \\
 \idotsint_0^{\pi} \prod_{1 \leq j < \ell \leq N-1} \sin\left(\theta_\ell - \theta_j\right)^2  \prod_{n=1}^{N-1}  e^{i k \theta_n} \sin\left(\theta_n\right)^{k+2}  d\theta_n
\end{multline*}
The next step is to observe that
\[
\sin(\theta_\ell - \theta_j) = \sin \theta_\ell  \sin \theta_j \left(\cot \theta_\ell  - \cot \theta_j \right)
\]
so we have
\begin{multline*}
e^{-i\pi k(N-2)/2} \frac{2^{(N-2)(N-1) + (k+2)(N-1)+(N-1)}}{(2\pi)^{N-1} N!} \\
 \idotsint_0^{\pi} \prod_{1 \leq j < \ell \leq N-1} \left(\cot \theta_\ell - \cot \theta_j\right)^2  \prod_{n=1}^{N-1}  e^{i k \theta_n} \sin\left(\theta_n\right)^{k+2+2N-4}  d\theta_n
\end{multline*}
Finally we change variables to $x_n = \cot(\theta_n)$, noting that $d x_n= - d\theta_n/\sin^2\theta_n$, and using $(\sin \theta_n)^2 = 1/(1+x_n^2)$ and, for $0 \leq \theta_n \leq \pi$,
\begin{align*}
e^{i\theta_n} \sin \theta_n &= \cos \theta_n \sin\theta_n + i \sin^2 \theta_n \\
&= \frac{x_n + i }{1+x_n^2} \\
&= i \frac{1}{1+i x_n}
\end{align*}
with $\theta=0$ corresponding to $+\infty$ and $\theta=\pi$ corresponding to $-\infty$. This substitution yields
\begin{multline*}
e^{-i\pi k(N-2)/2} \frac{2^{(N-1)(N+k+1)}}{(2\pi)^{N-1} N!} \\
 \idotsint_{-\infty}^{\infty} \prod_{1 \leq j < \ell \leq N-1} \left(x_\ell - x_j\right)^2  \prod_{n=1}^{N-1} e^{i \pi k /2} \left(1+i x_n\right)^{-k}  \left(1+x_n^2\right)^{-N}  d x_n
\end{multline*}
where the minus sign from the change of variables is used to reverse the integrals' order so they run from $-\infty$ to $+\infty$.

Expanding and simplifying, this is
\begin{multline*}
e^{i \pi k /2}  \frac{2^{(N-1)(N+k+1)}}{(2\pi)^{N-1} N!} \\
 \idotsint_{-\infty}^{\infty} \prod_{1 \leq j < \ell \leq N-1} \left(x_\ell - x_j\right)^2  \prod_{n=1}^{N-1} \left(1+i x_n\right)^{-k-N}  \left(1-i x_n\right)^{-N}  d x_n
\end{multline*}

The integral is now exactly a form of Selberg's integral, $J(a,b,\alpha,\beta,\gamma,N)$ (see \cite[equation 17.5.2]{Mehta}, for example\footnote{A closer inspection of the proof in \cite{Mehta} shows the stated regime of convergence in 17.5.5 is not correct, and instead should read $-\frac{1}{N} < \Re(\gamma) < \frac{\Re(\alpha)+\Re(\beta)-1}{2(N-1)}$ for the integral $J(a,b,\alpha,\beta,\gamma,N)$ to converge.}), and we have
\begin{multline*}
\idotsint_{-\infty}^{\infty} \prod_{1 \leq j < \ell \leq N-1} \left(x_\ell - x_j\right)^2  \prod_{n=1}^{N-1} \left(1+i x_n\right)^{-k-N}  \left(1-i x_n\right)^{-N}  d x_n \\
=
J(1,1,k+N,N,1,N-1) \\
= \frac{(2\pi)^{N-1}}{2^{(N-1)(k+N+1)}} \prod_{j=0}^{N-2} \frac{\Gamma(2+j) \Gamma(k+N+1-j)}{\Gamma(k+N-j)\Gamma(N-j)}
\end{multline*}
with the integral on the left-hand side converging so long as $\Re(k)>-3$, and so the expectation becomes
\begin{align*}
\E_N\left[ Z'(\theta_N,A)^k \right] &= e^{i \pi k /2} \frac{1}{ N!} \prod_{j=0}^{N-2} \frac{\Gamma(2+j) \Gamma(k+N+1-j)}{\Gamma(k+N-j)\Gamma(N-j)}\\
&=e^{i \pi k /2} \frac{\Gamma(N+k+1)}{ N! \Gamma(k+2)} \\
&\sim  \frac{e^{i \pi k /2}}{\Gamma(k +2)} N^{k}
\end{align*}
for large $N$, completing the proof of Theorem~\ref{thm:RMTn1}. Note that the right-hand side provides an analytic continuation of the left-hand side beyond where the expectation naively makes sense. However, we wish to use the characteristic polynomial as a model for the Riemann zeta function, so will restrict ourselves to regions where the expectation converges.

\section{The moments of $Z_X'$: Proof of Theorem~\ref{thm:Z_X}} \label{sect:ZX}

Recall the definition of $Z_{N,X}(\theta,A)$, given in \eqref{eq:ZNX},
\[
Z_{N,X}(\theta,A) = \prod_{n=1}^N\exp\left(-\sum_{j=-\infty}^\infty U\big(\ii(\theta-\theta_n + 2\pi j)\log X\big)\right)
\]
where the $\theta_n$ are the eigenangles of the unitary matrix $A$ and where
\[
U(z) = \int_1^e u(y) E_1(z\log y)\;\dd y \, .
\]
Note that $U(z)$ is not an analytic function; it has a logarithmic singularity at $z=0$. To see this, recall the formula
\begin{equation*}
E_1(z) = - \log z  -\gamma  - \sum_{m=1}^{\infty}
\frac{(-1)^{m}z^{m}}{m!\,m} \,,
\end{equation*}
where $|\arg z| < \pi$, $\log z$ denotes the principal
branch of the logarithm, and $\gamma$ is Euler's constant.  From this and \eqref{U definition} we observe that we may interpret
$\exp(-U(z))$ to be an analytic function, asymptotic to $C z$ as $z\to 0$, for some constant $C$ which depends upon the smoothing function $u$.

It is convenient to factor out the behaviour around the eigenvalues (where $Z_{N,X}$ vanishes) as follows
\[
Z_{N,X}(\theta, A) = \prod_{m=1}^N \left(1-e^{-\ii(\theta-\theta_m)} \right) e^{F_X(\theta-\theta_m)}
\]
where
\begin{equation}\label{eq:Ftheta}
F_X(\vartheta) =  - \log\left(1-e^{-\ii \vartheta} \right) -\sum_{j=-\infty}^\infty U\big(\ii(\vartheta + 2\pi j)\log X\big)
\end{equation}
is a $2\pi$-periodic function which is continuously differentiable for all real $\vartheta$ --- the logarithmic singularities cancel out. The first statement is obvious. The second statement can be seen from the definition of $U(z)$ and the series expansion of $E_1(z)$ around $z=0$, but will be made explicit when we look at the decay of its Fourier coefficients in Lemma~\ref{lem:FourierCoeffs}).

Differentiating with respect to $\theta$,
\begin{multline*}
Z_{N,X}'(\theta,A) = \sum_{m=1}^N \ii e^{-\ii(\theta-\theta_m)} e^{F_X(\theta-\theta_m)} \prod_{\substack{n=1\\n \neq m}}^N \left(1-e^{-\ii(\theta-\theta_n)} \right) e^{F_X(\theta-\theta_n)} \\
+ \sum_{m=1}^N \left(1-e^{-\ii(\theta-\theta_m)} \right) F_X'(\theta-\theta_m)  e^{F_X(\theta-\theta_m)} \prod_{\substack{n=1\\n \neq m}}^N \left(1-e^{-\ii(\theta-\theta_n)} \right) e^{F_X(\theta-\theta_n)}
\end{multline*}
and substituting $\theta=\theta_N$, we see that for every $m \neq N$ there is a term in the product that vanishes (so only the $m=N$ terms survives), and every term in the second term vanishes, including the $m=N$ term. That is,
\[
Z_{N,X}'(\theta_N,A) = \ii  e^{F_X(0)} \prod_{n=1}^{N-1} \left(1-e^{-\ii(\theta_N-\theta_n)} \right) e^{F_X(\theta_N-\theta_n)}
\]

\begin{remark}
    There is nothing special about picking $\theta_N$ over any of the other eigenvalues. This is made for notational convenience. The true calculation would be to evaluate
    \[
    \frac{1}{N} \sum_{n=1}^N Z_{N,X}'(\theta_n,A)
    \]
    similar to what we do in Section~\ref{sect:Selberg}, but by rotation invariance of Haar measure the result will be the same.
\end{remark}

We now calculate the expected value of $Z_{N,X}'(\theta_N,A)^k$ for $k\in\CC$ when averaged over all $N\times N$ unitary matrices $A$ distributed according to Haar measure, employing the same trick we used in Section~\ref{sect:Selberg} (which was first employed in \cite{HKO00} for similar purposes). We have
\begin{align}
\E_{N} \left[ Z_{N,X}'(\theta_N,A)^k \right] &=  e^{\ii k \pi / 2} e^{k F_X(0)} \E_N\left[ \prod_{n=1}^{N-1} \left(1-e^{-\ii(\theta_N-\theta_n)} \right)^k e^{k F_X(\theta_N-\theta_n)}  \right] \notag\\
&= e^{\ii k \pi / 2} e^{k F_X(0)} \frac{1}{N} \E_{N-1} \left[ \prod_{n=1}^{N-1} \left|1-e^{\ii\vartheta_n}\right|^2  \left(1-e^{\ii \vartheta_n} \right)^k e^{k F_X(-\vartheta_n)} \right] \label{eq:MmtsZNX}
\end{align}
where we interpret $e^{\ii \vartheta_n}$ as the eigenvalues of an $(N-1) \times (N-1)$ unitary matrix. (Briefly, what we have done is write the first expectation out as a $N$-dimensional Weyl integral, then change variables to $\vartheta_n = \theta_n - \theta_N$ for $n=1,\dots,N-1$. Making use of the fact the integrand is $2\pi$-periodic, we turn it back into a ($N-1$)-dimension Weyl integral, plus an extra trivial integral over $\theta_N$).

Because $k$ is complex, we need to be precise about the choice of branch taken. We define $\left(1-e^{\ii \vartheta} \right)^k$ to be the value obtained from continuous variation of  $\left(1-e^{\ii \vartheta}e^{-\varepsilon} \right)^k$ for $\varepsilon>0$, converging to the value $1$ as $\varepsilon\to\infty$. This is the same as our choice of branch for the characteristic polynomial in Section~\ref{sect:Selberg}.

We may now use Heine's Lemma~\cite{Sze39}\footnote{Though the identity was first written down in 1939 by Szeg\H{o}, he gave the credit to Heine (who lived from 1821 to 1881).} to write the inner expectation as a Toeplitz determinant. We find that
\begin{align*}
\E_{N-1} \left[ \prod_{n=1}^{N-1} \left|1-e^{\ii\vartheta_n}\right|^2  \left(1-e^{\ii \vartheta_n} \right)^k e^{k F_X(-\vartheta_n)} \right] &= D_{N-1}[f] \\
&:= \det\left\vert \hat f_{j-\ell} \right|_{1 \leq j,\ell \leq N-1}
\end{align*}
with
\[
\hat f_{j-\ell} = \frac{1}{2\pi} \int_0^{2\pi} f(\vartheta) e^{-\ii (j-\ell) \vartheta} \dd \vartheta
\]
for the symbol
\begin{equation*}
f(\vartheta) = \left|1-e^{\ii\vartheta}\right|^2  \left(1-e^{\ii \vartheta} \right)^k e^{k F_X(-\vartheta)}
\end{equation*}

Note that $f(\vartheta)$ has only one singularity at $\vartheta=0$. Toeplitz determinants with such symbols have been calculated asymptotically by Ehrhardt and Silbermann in~\cite{ErhSil97} (building on extensive previous work by several other authors).

They show (in their Theorem 2.5) that if $b(\vartheta)$ is a suitably smooth (defined in their paper) $2\pi$-periodic function with winding number zero and whose logarithm has the Fourier series expansion
\[
\log b(\vartheta) = \sum_{m=-\infty}^\infty s_m e^{\ii m \vartheta}
\]
then for symbols of the form
\[
f(\vartheta) = b(\vartheta) \left(1-e^{\ii \vartheta}\right)^\gamma \left(1-e^{-\ii \vartheta}\right)^\delta
\]
subject to $\gamma+\delta \not\in\{-1,-2,-3,\dots\}$, then for any $\varepsilon>0$
\begin{equation*}
    D_{N-1}[f] = C_1 N^{\gamma \delta} C_2^{N-1} \left(1+ O\left(\frac{1}{N^{1-\varepsilon}}\right) \right)
\end{equation*}
where
\begin{equation*}
C_1 = \frac{G(1+\gamma) G(1+\delta)}{G(1+\gamma+\delta)} \exp\left(\sum_{m=1}^\infty m s_m s_{-m} \right) \exp\left(-\delta \sum_{m=1}^\infty s_m \right)   \exp\left(-\gamma \sum_{m=1}^\infty s_{-m} \right)
\end{equation*}
where $G$ is the Barnes $G$-function, and where
\[
C_2 = \exp(s_0) .
\]

In our case, $\gamma=k+1$ and $\delta=1$ and $b(\vartheta) = \exp(k F_X(-\vartheta))$. To complete the proof, we simply need to calculate the Fourier coefficients of $k F_X(-\vartheta)$ (those coefficients being the desired $s_m$).

We will, in fact, delay the exact calculation of $s_m$ to Lemma~\ref{lem:FourierCoeffs} to quickly complete the proof of the theorem. All we need from the lemma is that $s_m=0$ if $m \leq 0$. From \eqref{eq:MmtsZNX} and using Ehrhardt and Silbermann's formula above, with the values $\gamma=k+1$ and $\delta=1$, we have
\[
\E_{N} \left[ Z_{N,X}'(\theta_N,A)^k \right] \sim  e^{\ii k\pi/2} e^{k F_X(0)} \frac{1}{N} \frac{G(k+2) G(2)}{G(k+3)} \exp\left(-\sum_{m=1}^\infty s_m\right) N^{k+1}
\]

Finally, note that $G(2)=1$, $G(k+3) = \Gamma(k+2) G(k+2)$ and
\[
e^{k F_X(0)} = \exp\left(\sum_{m=-\infty}^\infty s_m \right) = \exp\left(\sum_{m=1}^\infty s_m \right)
\]
(the first equality following from setting $\vartheta=0$ in the Fourier series expansion; the second equality comes from knowing $s_m=0$ for $m\leq 0$). Therefore,
\[
\E_{N} \left[ Z_{N,X}'(\theta_N,A)^k \right] \sim   \frac{e^{\ii k \pi/2}}{\Gamma(k+2)} N^k
\]
as $N\to\infty$, as required. This completes the proof of Theorem~\ref{thm:Z_X}.

\begin{remark}
    The proof of this theorem  deals with the analytic continuation of the objects under consideration, and the result holds for $k \in \CC$ such that $k \not\in \{-3,-4,-5,\dots\}$. Similar to the restriction imposed in Theorem~\ref{thm:RMTn1}, for our purposes as a model for zeta, we must stop before the first singularity; that is, in Conjecture~\ref{conj:n1} we take $\Re(k)>-3$.
\end{remark}

\begin{lemma} \label{lem:FourierCoeffs}
    Let $F_X(\vartheta)$ be defined in \eqref{eq:Ftheta}, then
    \[
    k F_X(-\vartheta) = \sum_{m=-\infty}^\infty s_m e^{\ii m \vartheta}
    \]
    where
    \[
    s_m =
    \begin{cases}
    0 & \text{ if } m \leq 0\\
    \frac{k}{m}  \left(1 - \int_1^{\exp(m/\log X)} u(y) \dd y \right) & \text{ if } 1 \leq m < \log X  \\
    0 & \text{ if } m\geq \log X
    \end{cases}
    \]
\end{lemma}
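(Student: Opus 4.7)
The plan is to compute the Fourier coefficients of $k F_X(-\vartheta)$ by splitting
\[
F_X(-\vartheta) = -\log(1-e^{i\vartheta}) - \sum_{j=-\infty}^\infty U\bigl(-i(\vartheta + 2\pi j)\log X\bigr),
\]
where the second sum has been reindexed $j \mapsto -j$. The first piece has the familiar expansion $-\log(1-e^{i\vartheta}) = \sum_{m \geq 1} e^{im\vartheta}/m$, contributing $1/m$ at frequency $m \geq 1$ and zero otherwise.

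For the second piece I would apply Poisson summation. Since $u \in C^\infty$ has compact support in $[e^{1-1/Y}, e]$, repeated integration by parts in $y$ in the defining integral for $U$ shows $U(-i\vartheta \log X) = O_K(|\vartheta|^{-K})$ for any $K$, so $\sum_j U(-i(\vartheta + 2\pi j)\log X)$ converges absolutely away from $\vartheta \in 2\pi\mathbb{Z}$. Its $m$-th Fourier coefficient equals $\frac{1}{2\pi}\hat\phi(m)$ where $\phi(\vartheta) = U(-i\vartheta \log X)$ and $\hat\phi(m) = \int_{-\infty}^\infty \phi(\vartheta) e^{-im\vartheta}\,\dd\vartheta$.

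Substituting the definition of $U$, swapping the order of the integrals in $y$ and $\vartheta$, and inserting the representation $E_1(z) = \int_1^\infty e^{-zt}/t\,\dd t$ reduces the inner integral to (distributionally)
\[
\int_{-\infty}^\infty e^{-i\vartheta(at - m)}\,\dd\vartheta = 2\pi\,\delta(at - m),
\]
with $a = \log X\log y > 0$. For $m \geq 1$, this delta is supported at $t = m/a \geq 1$ precisely when $\log y \leq m/\log X$; for $m \leq 0$ no contribution survives. Integrating against $u(y)$ then yields
\[
\hat\phi(m) = \frac{2\pi}{m} \int_1^{\min(e,\,\exp(m/\log X))} u(y)\,\dd y \qquad (m \geq 1),
\]
and $\hat\phi(m) = 0$ for $m \leq 0$. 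Combining the two pieces and multiplying by $k$: for $m \leq 0$ both contributions vanish; for $1 \leq m < \log X$ we have $\exp(m/\log X) < e$, giving $s_m = \frac{k}{m}\bigl(1 - \int_1^{\exp(m/\log X)} u(y)\,\dd y\bigr)$; and for $m \geq \log X$ the integral equals $\int_1^e u = 1$, which cancels the logarithmic contribution $k/m$ to leave $s_m = 0$.

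The main obstacle is rigorously justifying the delta-function identity, given that the individual pieces $\phi$ and $h$ carry matching logarithmic singularities at $\vartheta \in 2\pi\mathbb{Z}$ which cancel only in their sum. I would handle this by regularising $\vartheta \mapsto \vartheta - i\epsilon$ so that $-i(\vartheta - i\epsilon)a$ has positive real part, making the $E_1$-integrals and Fourier transforms absolutely convergent, and then passing to the limit $\epsilon \to 0^+$; the cancellation of logarithmic singularities guarantees that the limit is finite and produces the Fourier series of the genuinely $C^1$ function $F_X(-\vartheta)$.
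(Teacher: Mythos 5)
Your proof is correct and follows essentially the same route as the paper: split off the $-\log(1-e^{i\vartheta})$ term, unfold the periodized sum so the $m$-th Fourier coefficient becomes $\tfrac{1}{2\pi}\hat\phi(m)$ with $\phi(\vartheta)=U(-i\vartheta\log X)$, and evaluate via the Fourier transform of $E_1$. The only difference is cosmetic: you derive the transform of $E_1$ from its integral representation via a distributional delta argument (together with a sketch of the $\vartheta\mapsto\vartheta-i\epsilon$ regularisation needed to justify it), whereas the paper simply records the Fourier transform of the exponential integral as a known fact.
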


\begin{remark}
    Since $u$ has total mass 1, for $m>0$ we can write
    \[
    1 - \int_1^{\exp(m/\log X)} u(y) \dd y = \int_{\exp(m/\log X)}^\infty u(y) \dd y
    \]
    although due to the support condition on $u$, if $m\geq \log X$ then $\exp(m/\log X)>e$ and so the integral vanishes.
\end{remark}

\begin{proof}[Proof of Lemma~\ref{lem:FourierCoeffs}]
Note that
\[
s_m = \frac{1}{2\pi} \int_{-\pi}^\pi k F_X(-\vartheta) e^{-\ii m \vartheta} \dd \vartheta .
\]
From \eqref{eq:Ftheta} we have
\[
k F_X(-\vartheta) =  - k\log\left(1-e^{\ii \vartheta} \right) -k \sum_{j=-\infty}^\infty U\big(\ii(-\vartheta + 2\pi j)\log X\big) .
\]
For $m$ an integer, we have
\begin{equation}\label{eq:FourierCoeffLog}
\frac{1}{2\pi} \int_{-\pi}^\pi -k \log\left(1-e^{\ii \vartheta} \right) e^{-\ii m \vartheta} \dd\vartheta =
\begin{cases}
    0 & \text{ if } m \leq 0\\
    \frac{k}{m} & \text{ if } m \geq 1
\end{cases}
\end{equation}
(To non-rigorously see why, note that $\log\left(1-e^{\ii \vartheta} \right) = -\sum_{\ell=1}^\infty e^{\ii \ell \vartheta}/\ell$ and the only term in the sum that survives the integration is when $\ell=m$ for positive integers $m$).

Furthermore, we have
\begin{multline*}
\frac{1}{2\pi}  \int_{-\pi}^\pi -k \sum_{j=-\infty}^\infty U\left(\ii(-\vartheta+2\pi j) \log X \right) e^{-\ii m \vartheta} \dd\vartheta
= \frac{-k}{2\pi}
\int_{-\infty}^\infty U(-\ii \vartheta \log X) e^{-\ii m \vartheta} \dd\vartheta\\
= \frac{-k}{2\pi} \int_{-\infty}^\infty \int_1^e u(y) E_1(-\ii \vartheta \log X\log y) e^{-\ii m \vartheta} \;\dd y \dd\vartheta
\end{multline*}
where the final equality comes from inserting the definition of $U$ from \eqref{U definition} and the support of $u$. Swapping the order of integration, this is
\[
\frac{-k}{2\pi}  \int_1^e u(y) \int_{-\infty}^\infty E_1(-\ii \vartheta \log X\log y) e^{-\ii m \vartheta} \; \dd\vartheta \dd y .
\]

The Fourier transform of the exponential integral is
\[
\frac{1}{2\pi} \int_{-\infty}^\infty E_1(-\ii \vartheta \log X\log y) e^{-\ii m \vartheta} \; \dd\vartheta  =
\begin{cases}
    0 & \text{ if } m<\log X \log y\\
    \frac{1}{2m} & \text{ if } m = \log X \log y\\
    \frac{1}{m} & \text{ if } m > \log X \log y
\end{cases}
\]

Therefore, making use of the fact that $u$ is supported on $[1,e]$ and has total mass 1, we have
\begin{multline*}
\frac{-k}{2\pi}  \int_1^e u(y) \int_{-\infty}^\infty E_1(-\ii \vartheta \log X\log y) e^{-\ii m \vartheta} \; \dd\vartheta \dd y = \frac{-k}{m} \int_1^{\max(1,\exp(m/\log X))} u(y) \dd y \\
=\begin{cases}
    0 & \text{ if } m \leq 0 \\
    -\frac{k}{m} \int_1^{\exp(m/\log X)} u(y) \dd y & \text{ if } 1 \leq m < \log X \\
    -\frac{k}{m}  & \text{ if } m \geq \log X
\end{cases}
\end{multline*}

The Fourier coefficient, $s_m$ is the sum of this and \eqref{eq:FourierCoeffLog}. Note that both terms are zero for $m\leq 0$ and the two terms perfectly cancel each other if $m\geq \log X$.
\end{proof}

\section{The moments of $P_X$: Proof of Theorem~\ref{thm:PXComplex}}\label{sect:PX}

Recall $P_X(s)$, given by \eqref{eq:PX}. In this section we will show that for any complex $k$, the mean of $P_X(\rho)^k$ is asymptotically 1, when averaged over zeros of the zeta function. Specifically Theorem~\ref{thm:PXComplex} says that under the Riemann Hypothesis, for $X>2$ with $X=O(\log T)$
\[
\frac{1}{N(T)} \sum_{0 < \gamma \leq T} P_X(\rho)^k = 1 + O\left(\frac{\log\log T}{\log T}\right)
\]
as $T\to\infty$.

This should be compared to a result of Bui, Gonek and Milinovich (Theorem 2.2 of \cite{BGM13}) which stated that, under the Riemann Hypothesis,
for $\varepsilon > 0$ and $X, T \to\infty$ with $X=O\left( (\log T)^{2-\varepsilon} \right)$ then for any $k\in\RR$ one has
\[
\frac{1}{N(T)} \sum_{0 < \gamma \leq T} \left\vert P_X(\rho)\right\vert^{2k} = a(k) \left(e^{\gamma_0} \log X\right)^{k^2} \left(1+O_k\left((\log X)^{-1} \right) \right)
\]
where $a(k)$ is an explicit product over primes given by \eqref{AF}.

Before we prove Theorem~\ref{thm:PXComplex} we need to state and prove some preliminary lemmas.

\begin{lemma}\label{lem:DirichletExpansionP_X}
For any $k \in \CC$, the function $P_X(s)^k$ can be written as a non-vanishing absolutely convergent Dirichlet series
\[
P_X(s)^k = \sum_{m=1}^\infty \frac{a_k(m)}{m^s}
\]
with $a_k(1)=1$, and $|a_k(m)| \leq d_{|k|}(m)$ where $d_{|k|}(m)$ is the $|k|$\textsuperscript{th} divisor function, and with 
\[
a_k(p)=
\begin{cases}
    k & \text{ if } p\leq X\\
    0 & \text{ if } p>X .
\end{cases}
\]
\end{lemma}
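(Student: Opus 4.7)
The key observation is that $\Lambda(n)/\log n = 1/j$ when $n = p^j$, so the sum in the exponent of $P_X(s)$ separates across primes and I can write
\[
P_X(s)^k = \prod_{p \leq X} L_{p,k}(s), \qquad L_{p,k}(s) := \exp\!\left( k \sum_{\substack{j \geq 1 \\ p^j \leq X}} \frac{p^{-js}}{j}\right).
\]
Each local factor $L_{p,k}(s)$ is an entire function of $p^{-s}$, hence admits a convergent power series expansion $L_{p,k}(s) = \sum_{n \geq 0} c_{p,n}(k)\, p^{-ns}$ with $c_{p,0}(k) = 1$. Multiplying the Euler factors and invoking unique factorization gives the formal Dirichlet series
\[
P_X(s)^k = \sum_{m \geq 1} \frac{a_k(m)}{m^s}, \qquad a_k(m) = \prod_{p} c_{p, v_p(m)}(k),
\]
supported on $X$-smooth integers. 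Non-vanishing is immediate from the exponential representation, and $a_k(1) = 1$ is the empty product.

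To pin down $a_k(p)$ I would read off the linear term: for $p \leq X$ the coefficient of $p^{-s}$ in $L_{p,k}$ can only come from the $n=1$, $j=1$ contribution (higher powers of the exponent produce only composite prime-powers $p^j$ with $j \geq 2$), so $a_k(p) = c_{p,1}(k) = k$; for $p > X$ the prime $p$ does not appear in the support of the exponent, so $a_k(p) = 0$.

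For the bound $|a_k(m)| \leq d_{|k|}(m)$ I would run a two-step termwise comparison. First, expanding $L_{p,k}(s) = \sum_{n \geq 0} (k^n/n!) A_p(s)^n$ with $A_p(s) := \sum_{p^j \leq X} p^{-js}/j$, which has non-negative coefficients, and taking absolute values termwise yields $|c_{p,n}(k)| \leq c_{p,n}(|k|)$. Second, since $A_p(s) \leq -\log(1-p^{-s})$ coefficient-wise and exponentiation preserves such inequalities between power series with non-negative coefficients, one obtains $c_{p,n}(|k|) \leq [(1-p^{-s})^{-|k|}]_n = d_{|k|}(p^n)$, where $[\,\cdot\,]_n$ extracts the coefficient of $p^{-ns}$. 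Multiplicativity across primes then gives $|a_k(m)| \leq \prod_p d_{|k|}(p^{v_p(m)}) = d_{|k|}(m)$, and absolute convergence on $\Re(s) > 1$ follows from the absolute convergence of $\zeta(s)^{|k|}$ there.

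There is no serious obstacle; the one step requiring a moment of care is the preservation of coefficient-wise inequality under exponentiation of power series, which is a purely formal consequence of the observation that $A^n - B^n$ has non-negative coefficients whenever $A-B$, $A$, and $B$ do, applied termwise to the Taylor expansion of $\exp$.
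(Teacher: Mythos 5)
Your proof is correct and follows essentially the same route as the paper: decompose $P_X(s)^k$ into local Euler factors, read off $a_k(p)$ from the linear term, and compare the local coefficients against those of $(1-p^{-s})^{-|k|}$ to get $|a_k(p^r)|\leq d_{|k|}(p^r)$. Your termwise exponentiation argument is a slightly more explicit justification of the bound than the paper's terse ``the coefficients in the polynomial will be strictly smaller''; the only small blemish is that you deduce absolute convergence only for $\Re(s)>1$, whereas the support on $X$-smooth integers and the entirety of each local factor in $p^{-s}$ give absolute convergence for all $s$, which is what the paper uses.
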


\begin{proof}
Since $P_X(s)$ given in \eqref{eq:PX} is the exponential of a finite Dirichlet polynomial, its $k$th power is an entire non-vanishing function, so can be written as a Dirichlet series
\begin{align*}
P_X(s)^k &= \exp\left(k \sum_{n\leq X}  \frac{\Lambda(n)  }{\log n}\frac{1}{n^s} \right)\\
&= \sum_{m=1}^\infty \frac{a_k(m)}{m^s}
\end{align*}
where the sum converges absolutely for any $s$. Note that, by construction, $a_k$ is multiplicative (indeed, it is the multiplicative nature of $P_X$ that allows us to take complex moments).

Temporarily letting $z=p^{-s}$, if we define $\ell$ be to be largest integer such that $p^\ell \leq X$ then we have
\begin{equation}\label{eq:a_k_exp}
\sum_{r=0}^\infty a_k(p^r) z^r = \exp\left(\sum_{j=1}^\ell \frac{k}{j} z^j \right)
\end{equation}
where the left-hand side essentially comes from expanding out the exponential as a series around $z=0$.

For example, if $p\leq X$ is a prime (which means $\ell \geq 1$), then $a_k(p)=k$, and if $p>X$ then $a_k(p)=0$. In particular, this implies $a_k(m)=0$ if $m$ has any prime divisor greater than $X$.

In general $a_k(p^r)$ can be written as a polynomial in $k$ of degree $r$, and $a_k(p^r) = d_k(p^r)$ so long as $p^r \leq X$ (that is, so long as $r\leq \ell$). If $p^r > X$ then due to missing terms in the exponential, the coefficients in the polynomial will be strictly smaller. Hence $|a_k(p^m)| \leq d_{|k|}(p^m)$ for any complex $k$, any prime $p$, any power $r$.
\end{proof}

\begin{example}
To demonstrate the last inequality more explicitly, consider $a_k(p^4)$. Expanding the right-hand side of \eqref{eq:a_k_exp} for $\ell=4,3,2,1$ we see
\[
a_k(p^4) = 
\begin{cases}
    \frac{k^4}{24}+\frac{k^3}{4}+\frac{11 k^2}{24}+\frac{k}{4} & \text{ if } p^4 \leq X \\ 
    \frac{k^4}{24}+\frac{k^3}{4}+\frac{11 k^2}{24}  & \text{ if } p^3 \leq X < p^4\\
    \frac{k^4}{24}+\frac{k^3}{4}+\frac{k^2}{8}  & \text{ if } p^2 \leq X < p^3\\
   \frac{k^4}{24}  & \text{ if } p \leq X < p^2
\end{cases}
\]
If $\ell >4$ none of the additional terms in the sum on the right-hand side of \eqref{eq:a_k_exp} contribute to $a_k(p^4)$ since the additional terms are $z^5$ or higher powers. In all cases therefore, we have
\[
|a_k(p^4)| \leq  \frac{|k|^4}{24}+\frac{|k|^3}{4}+\frac{11 |k|^2}{24}+\frac{|k|}{4} = \frac{1}{24} \left\lvert k\right\rvert \left(\left\lvert k\right\rvert+1\right) \left(\left\lvert k\right\rvert+2\right) \left(\left\lvert k\right\rvert+3\right) = d_{\lvert k \rvert}(p^4)
\]
\end{example}

In the proof of Theorem~\ref{thm:PXComplex}, it will be important to truncate the infinite sum.
\begin{lemma}[Gonek--Hughes--Keating]\label{lem:TruncatePX}
For $2 \leq X \ll (\log T)^{2-\varepsilon}$ with $\varepsilon>0$, $k \in \CC$, and $\theta$ a small positive number, we have
\[
P_X(\tfrac12 + \ii t)^k = \sum_{m \leq T^{\theta}} \frac{a_k(m)}{m^{1/2 + \ii t}} + O_k\left(T^{-\varepsilon \theta / 2}\right)
\]
\end{lemma}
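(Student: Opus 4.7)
The plan is to invoke Rankin's trick applied to the Dirichlet series supplied by Lemma~\ref{lem:DirichletExpansionP_X}. Since $P_X(s)^k=\sum_{m\geq 1} a_k(m)/m^s$ with $a_k(m)$ supported on $X$-smooth integers and $|a_k(m)|\leq d_{|k|}(m)$, subtracting the truncated sum reduces matters to bounding the tail
\[
\left|\sum_{m>T^\theta}\frac{a_k(m)}{m^{1/2+\ii t}}\right|.
\]
For any parameter $\alpha>0$, the inequality $1\leq (m/T^\theta)^\alpha$ valid for $m>T^\theta$ gives
\[
\left|\sum_{m>T^\theta}\frac{a_k(m)}{m^{1/2+\ii t}}\right| \;\leq\; T^{-\theta\alpha}\sum_{\substack{m\geq 1 \\ m\text{ is }X\text{-smooth}}}\frac{d_{|k|}(m)}{m^{1/2-\alpha}}.
\]

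The restricted series factors as the Euler product
\[
\prod_{p\leq X}\left(1-p^{-1/2+\alpha}\right)^{-|k|},
\]
convergent for $\alpha<1/2$. Taking logarithms and using $-\log(1-x)=x+O(x^2)$ together with the prime number theorem and partial summation, the logarithm is bounded by $C_k\,X^{1/2+\alpha}/\log X$ for some constant $C_k$ depending only on $k$.

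Now I would invoke the hypothesis $X\ll(\log T)^{2-\varepsilon}$. Choosing $\alpha$ positive but sufficiently small in terms of $\varepsilon$, specifically any $\alpha$ satisfying $(2-\varepsilon)(\tfrac12+\alpha)<1$, arranges $X^{1/2+\alpha}/\log X=o(\log T)$, and the Euler product is thus $T^{o(1)}$ as $T\to\infty$. The overall tail bound becomes $T^{-\theta\alpha+o(1)}$, which for a suitable choice of $\alpha$ and $T$ large enough yields $O_k\bigl(T^{-\varepsilon\theta/2}\bigr)$ after absorbing the $o(1)$ into the implied constant (the assumption $X\ll\log T$ in the ambient Theorem~\ref{thm:PXComplex} makes the Euler product even smaller, giving substantial slack).

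The main obstacle, and the only subtle point in the argument, is the balancing of $\alpha$: Rankin's saving $T^{-\theta\alpha}$ prefers larger $\alpha$, whereas keeping the Euler product subpolynomial in $T$ forces $\alpha$ below a threshold determined by $\varepsilon$. The hypothesis $X\ll(\log T)^{2-\varepsilon}$ with $\varepsilon>0$ fixed is precisely what guarantees the trade-off is feasible and delivers a genuine polynomial saving in $T$.
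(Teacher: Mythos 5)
The overall structure—Rankin's trick applied to the tail of the Dirichlet series from Lemma~\ref{lem:DirichletExpansionP_X}—is reasonable and is in the same spirit as the contour-shift argument behind Lemma~2 of Gonek--Hughes--Keating, to which the paper's proof defers. However, there is a genuine quantitative gap in the final balancing step. You need the double constraint $\alpha \geq \varepsilon/2$ (so that $T^{-\theta\alpha}$ beats the claimed error $T^{-\varepsilon\theta/2}$) and $(2-\varepsilon)\bigl(\tfrac12+\alpha\bigr) < 1$ (so that the Euler product is $T^{o(1)}$). The second constraint forces $\alpha < \tfrac{\varepsilon}{2(2-\varepsilon)}$, and for $0<\varepsilon<1$ this threshold is \emph{strictly less} than $\varepsilon/2$. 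Thus no admissible $\alpha$ exists, and the Rankin optimisation delivers a power saving of at best $T^{-\theta\varepsilon/(2(2-\varepsilon))+o(1)}$ rather than $T^{-\varepsilon\theta/2}$. The deficit $T^{\theta(\varepsilon/2-\alpha)}$ is a fixed positive power of $T$, so it cannot be ``absorbed into the implied constant'' as you assert; the $o(1)$ in the exponent tends to zero, but $\theta(\varepsilon/2-\alpha)$ does not.

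Concretely, with $\varepsilon=\tfrac12$ the Euler-product constraint caps $\alpha$ at $\tfrac16$, whereas the claimed saving demands $\alpha\geq\tfrac14$; choosing $\alpha=\tfrac14$ makes the Euler product $\exp\bigl(\Theta_k((\log T)^{9/8}/\log\log T)\bigr)$, which is superpolynomial in $T$ and destroys the bound. You are right that in the ambient Theorem~\ref{thm:PXComplex} the stronger hypothesis $X = O(\log T)$ gives room to take $\alpha$ up to nearly $\tfrac12$, so the method does succeed in that application; but as a proof of the lemma as stated (with $X$ as large as $(\log T)^{2-\varepsilon}$) the exponent claimed is not reached by this argument. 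Either weaken the exponent in the error term to $T^{-\theta\varepsilon/(2(2-\varepsilon))+o(1)}$ (which is what the balancing actually yields and which suffices for every downstream use in the paper), or verify directly that the argument in GHK Lemma~2 really does obtain $\varepsilon\theta/2$ and reproduce the step that buys the improvement.
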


\begin{proof}
    This follows immediately from the proof of Lemma 2 in \cite{GHK07}, adapted for the case $k$ complex.
\end{proof}

In \cite{Gon93} Gonek proved unconditionally a uniform version of Landau's formula concerning sums over the non-trivial zeros of the Riemann zeta function, including the following corollary which requires the assumption of the Riemann Hypothesis.
\begin{lemma}[Gonek]\label{lem:LandauGonek}
Under the Riemann Hypothesis, for $T>1, m \in \mathbb{N}$ with $m\geq 2$ and $\rho$ a non-trivial zero of the Riemann zeta function $\zeta (s)$,
\begin{equation*}
\sum_{0< \gamma \leq T} m^{-\rho} = - \frac{T}{2 \pi} \frac{ \Lambda (m)}{m} + O(\log (2mT) \log\log (3m)),
\end{equation*}
where  $\Lambda (m)$ is the von Mangoldt function.
\end{lemma}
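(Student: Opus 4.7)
The plan is to reproduce Gonek's contour-integration argument from \cite{Gon93} and read off the error term under the Riemann Hypothesis. The starting point is the identity
\[
\sum_{0<\gamma\leq T} m^{-\rho} = \frac{1}{2\pi \ii} \oint_{\mathcal{R}} \frac{\zeta'(s)}{\zeta(s)} m^{-s} \, ds,
\]
where $\mathcal{R}$ is a positively oriented rectangle with vertices $c-\ii$, $c+\ii T$, $1-c+\ii T$, $1-c-\ii$, with $c = 1 + 1/\log T$ and $T$ displaced by $O(1)$ so that $|T-\gamma| \gg 1/\log T$ for every non-trivial zero. Such a displacement is always available by the standard density estimate $N(t+1)-N(t) \ll \log t$, and the residue at $s=1$ plus any trivial-zero contribution are $O(1)$.

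The contour then splits into four segments that must be estimated individually. On the right edge $\Re s = c$, the Dirichlet series $\zeta'/\zeta(s) = -\sum_n \Lambda(n)/n^s$ converges absolutely; integrating termwise, the diagonal term $n=m$ produces the main term $-(T/2\pi) \Lambda(m)/m$, while the off-diagonal terms $n \neq m$ contribute $O(\log(2mT))$ via $\bigl|\int_0^T (n/m)^{-\ii t}\,dt\bigr| \ll 1/|\log(n/m)|$ together with a refined estimate for $n$ close to $m$. On the left edge $\Re s = 1-c$, one invokes the functional equation to rewrite $\zeta'/\zeta(s)$ in terms of $\zeta'/\zeta(1-s)$ and explicit gamma-factor contributions; this reduces to a similar Dirichlet series calculation, but weighted by $m^{-(1-c)} m^{1-2c}$, and so its contribution is dominated by the right-edge estimate.

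The horizontal segments are handled using the standard RH bound $\zeta'/\zeta(\sigma+\ii T) \ll (\log T)^{2}$ uniformly in the strip, which is precisely where the careful choice of $T$ is crucial; their total contribution is $O((\log T)^{2} \log(2m))$. The main obstacle, and the technical heart of the argument, is the local analysis of the off-diagonal right-edge sum when $n$ lies within $O(m/\log T)$ of $m$, where the naive $1/\log(n/m)$ bound blows up: controlling the resulting sum of reciprocals of $\log(n/m)$ by $\log\log(3m)$ requires an elementary but delicate estimate on the distribution of integers near $m$, which is exactly the refinement carried out in \cite{Gon93}. Since the statement we need is already recorded there as a corollary to Gonek's uniform version of Landau's formula, the proof reduces to invoking that result directly.
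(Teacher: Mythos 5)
The paper does not prove this lemma; it is quoted directly from Gonek \cite{Gon93}, where it appears as a corollary (requiring RH) of his unconditional uniform Landau formula for $\sum_{0<\gamma\le T} x^{\rho}$. Your proposal ultimately reduces to that same citation, so the endpoint is fine, but the contour sketch you give in between contains a substantive error. With integrand $\frac{\zeta'}{\zeta}(s)\,m^{-s}$, the right edge $\Re s = c$ produces $-\sum_n \Lambda(n)(nm)^{-s}$; \emph{every} term here, including $n=m$, oscillates as $t$ runs up the edge, with $\int_{-1}^{T}(nm)^{-it}\,dt \ll 1/\log(nm) = O(1)$, and no contribution proportional to $T$ arises. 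There is no diagonal on the right edge. The main term $-\tfrac{T}{2\pi}\tfrac{\Lambda(m)}{m}$ in fact comes from the \emph{left} edge, where the functional equation produces $-\frac{\zeta'}{\zeta}(1-s)\,m^{-s}=\sum_n \Lambda(n)\,n^{-1}(n/m)^{s}$ and the $n=m$ term is constant.

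What you have actually sketched is the right-edge calculation for $\sum m^{\rho}$ with integrand $\frac{\zeta'}{\zeta}(s)\,m^{s}$, where $-\sum_n\Lambda(n)(m/n)^s$ does have a constant term at $n=m$ and the off-diagonal bound $1/|\log(m/n)|$ is the one you quote --- that is, Gonek's unconditional main theorem, not the lemma as stated. The Riemann Hypothesis then enters, not in bounding $\zeta'/\zeta$ on the horizontal segments (that bound is unconditional once $T$ is displaced off ordinates of zeros), but solely in passing from the $m^{\rho}$ result to the $m^{-\rho}$ statement via the identity $m^{-\rho}=m^{-1}\,\overline{m^{\rho}}$, valid only when $\rho$ lies on the critical line. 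This is exactly why the paper, following Gonek, labels the $m^{-\rho}$ version a conditional corollary. Your final sentence --- invoke Gonek's corollary directly --- is what carries the proof; the sketch preceding it, as written, would not reproduce the lemma.
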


\begin{remark}
    Note this result does not apply in the case when $m=1$, when the sum over zeros trivially equals $N(T)$.
\end{remark}

\begin{proof}[Proof of Theorem~\ref{thm:PXComplex}]
In the case when $\Re(s)=1/2$ and $X=(\log T)^{2-\varepsilon}$, using Lemmas~\ref{lem:DirichletExpansionP_X} and~\ref{lem:TruncatePX} we have
\[
P_X(s)^k = 1 +  \sum_{2 \leq m \leq T^{\theta}} \frac{a_k(m)}{m^{s}} + O_k\left(T^{-\varepsilon \theta / 2}\right)
\]

Therefore, assuming the Riemann Hypothesis, by  Lemma~\ref{lem:LandauGonek} we have
\begin{align*}
\sum_{0<\gamma\leq T} P_X(\rho)^k &= N(T) + \sum_{0<\gamma \leq T} \sum_{2\leq m\leq T^\theta} \frac{a_k(m)}{m^\rho} + O\left(N(T) T^{-\varepsilon \theta / 2}\right) \\
&= N(T) - \frac{T}{2 \pi} \sum_{2\leq m \leq T^\theta} \frac{ a_k(m) \Lambda (m)}{m} +  O\left(\log T \log\log T \sum_{2\leq m \leq T^\theta} |a_k(m)| \right)
\end{align*}

To bound the first sum on the right-hand side, note that the $\Lambda(m)$ forces the sum to be over primes and prime powers only, and the $a_k(m)$ ensures we only need primes $\leq X$ (since otherwise $a_k(m)=0$). Since $a_k(m) \ll m^\varepsilon$, the square- and higher-powers of primes form a convergent sum. Using Lemma~\ref{lem:DirichletExpansionP_X}  to evaluate $a_k(p)$ we see that if $X < T^{\theta}$
\begin{align*}
\sum_{2\leq m \leq T^\theta} \frac{ a_k(m) \Lambda (m)}{m} &= \sum_{p < X} \frac{ k \log p}{p} + O(1) \\
&\ll \log X
\end{align*}
For the second sum, in the error term, note that since $|a_k(m)| \leq d_{|k|}(m)$ we have
\begin{align}
\sum_{m\leq T^\theta} |a_k(m)| &\leq \sum_{m=1}^\infty d_{|k|}(m) \notag\\
&= P_X(0)^{|k|} \notag\\
&=O \left( \exp\left(c' \frac{X }{\log X} \right) \right) \label{eq:P_X(0)^k}
\end{align}
for any fixed $c'>|k|$. The last line follows from
\begin{align*}
\log P_X(0) &= \sum_{n\leq X}  \frac{\Lambda(n)  }{\log n} \\
&= \frac{X}{\log X} + O\left(\frac{X}{(\log X)^2}\right)
\end{align*}
by the Prime Number Theorem.

Therefore
\[
\sum_{0<\gamma\leq T} P_X(\rho)^k = N(T) + O\left(T \log X\right) + O\left(\log T \log\log T \exp\left(c' \frac{X }{\log X} \right)  \right)
\]

The error terms can be made to approximately match when $X=O(\log T)$.

This completes the proof of Theorem~\ref{thm:PXComplex}.
\end{proof}

\begin{remark}
    In the proof we can take $X$ to be anything up to $o(\log T \log\log T)$ before the second error term dominates the main term of $\frac{1}{2\pi} T\log T$. Since the error term as we have written it comes from a subsidiary main term, we cannot reduce the error down to  $O(T)$.
\end{remark}

\section{Twisted first moment of zeta and the splitting conjecture}\label{sect:twisted}

In this section we show that a twisted first moment due to Benli, Elma, Ng \cite{BEN23} can be used to prove that the hybrid methodology developed in Sections~\ref{sect:RMTresults} and \ref{sect:ZX} is correct for $k=1$, that is, for the first moment.

We have already argued that $Z_X'(\rho)^k$ can be modelled by $Z_{N,X}'(\theta_n,A)^k$, and the moments of these are found in Theorem \ref{thm:Z_X}. We shall now directly calculate $Z_X'(\rho)$ and note that the result is consistent with Theorem~\ref{thm:Z_X} once we recall that we use $N=\log(T/2\pi)$ to translate from our model to zeta. Incidentally, this also shows that, at least for $k=1$, the splitting conjecture discussed at the end of Section~\ref{sect:RMTresults} holds.

By \eqref{eq:HybridZetaPrimeRho} we see that, up to small error, $Z_X'(\rho) = \zeta'(\rho) P_X(\rho)^{-1}$. We prove the following theorem.
\begin{theorem}\label{thm:twist}
    Under the Riemann Hypothesis, for $X>2$ with $X=O(\log T)$
    \[
    \frac{1}{N(T)} \sum_{0<\gamma<T} \zeta'(\rho) P_X(\rho)^{-1} \sim \frac{1}{2} \log T
    \]
    as $T \rightarrow \infty$.
\end{theorem}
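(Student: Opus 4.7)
The plan is to Dirichlet-expand $P_X(\rho)^{-1}$ and reduce the question to a weighted sum of twisted first moments of $\zeta'$ at the zeros. Applying Lemmas~\ref{lem:DirichletExpansionP_X} and~\ref{lem:TruncatePX} with $k=-1$ gives
\[
P_X(\tfrac12+\ii t)^{-1} = \sum_{m\leq T^\theta} \frac{a_{-1}(m)}{m^{1/2+\ii t}} + O\!\left(T^{-\varepsilon\theta/2}\right)
\]
for any small fixed $\theta>0$, where $a_{-1}(1)=1$, $a_{-1}(p)=-1$ for primes $p\leq X$, $a_{-1}(p)=0$ for primes $p>X$, and $|a_{-1}(m)|\leq 1$ in general. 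Substituting this into the sum over zeros and interchanging order of summation gives
\[
\sum_{0<\gamma\leq T}\zeta'(\rho)P_X(\rho)^{-1} = \sum_{m\leq T^\theta} a_{-1}(m)\sum_{0<\gamma\leq T}\frac{\zeta'(\rho)}{m^\rho} + E,
\]
with a truncation error $E$ that is negligible under RH: Cauchy--Schwarz together with the second-moment bound coming from Hughes--Keating--O'Connell yields $\sum_{0<\gamma\leq T}|\zeta'(\rho)| \ll T(\log T)^{5/2}$, so $E\ll T^{1-\varepsilon\theta/2}(\log T)^{5/2}$, which is $o(T\log T)$.

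Next, I apply the Benli--Elma--Ng~\cite{BEN23} twisted first moment, which provides an asymptotic of the shape
\[
\sum_{0<\gamma\leq T}\zeta'(\rho)\,m^{-\rho} = M(T,m) + \text{error}
\]
valid uniformly in $m$ over an appropriate range. For $m=1$ this specialises to the classical Conrey--Ghosh--Gonek first moment, $M(T,1)\sim \frac{T}{4\pi}\log^2(T/2\pi)$; since $a_{-1}(1)=1$, dividing this single term by $N(T)\sim \frac{T}{2\pi}\log T$ produces precisely $\tfrac12\log T$, the claimed main term. For $m\geq 2$ the main term $M(T,m)$ is smaller in $T$ by a factor of $\log T$ and is supported on prime powers with an arithmetic weight of order $\Lambda(m)/m$.

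The remaining task is to show that the $m\geq 2$ contribution is genuinely of lower order. Higher prime powers are controlled by the convergent sum $\sum_{p}\sum_{j\geq 2}\Lambda(p^j)/p^j = O(1)$, while the prime contribution is bounded by
\[
T\log T\cdot \sum_{p\leq X}\frac{\log p}{p} \ll T\log T \cdot \log X = O(T\log T\log\log T)
\]
using Mertens' theorem and the hypothesis $X=O(\log T)$; after division by $N(T)$ this is $O(\log\log T) = o(\log T)$, as required. The hard part of this argument will be to verify uniformity of the Benli--Elma--Ng asymptotic in the twist parameter $m$ throughout the range $m\leq T^\theta$: their error term must be polynomially controlled in $m$ and summable against $|a_{-1}(m)|\leq 1$ without overwhelming the leading $T\log^2T$ main term. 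Once this uniformity is established, the rest is bookkeeping, and the computation is internally consistent with Theorem~\ref{thm:PXComplex} (taking $k=-1$), which says that $P_X^{-1}$ has mean $1$ on the zeros.
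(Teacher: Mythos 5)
Your proposal follows essentially the same route as the paper: expand $P_X(\rho)^{-1}$ via Lemma~\ref{lem:DirichletExpansionP_X}, feed the resulting Dirichlet polynomial into the Benli--Elma--Ng twisted first moment of $\zeta'$ over zeros, read off the Conrey--Ghosh--Gonek main term from $m=1$, and show that the $m\geq 2$ contribution is $O(T\log T\log\log T)$ using $a_{-1}(p)=-1$ for $p\leq X$, Mertens, and the hypothesis $X=O(\log T)$. Your preliminary bookkeeping (the truncation by Lemma~\ref{lem:TruncatePX}, the Cauchy--Schwarz bound $\sum_{0<\gamma\leq T}|\zeta'(\rho)|\ll T(\log T)^{5/2}$ for the tail error) is correct.

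The one thing you flag as ``the hard part'' --- uniformity of the twisted asymptotic in the twist parameter $m$ throughout $m\leq T^\theta$ --- is a genuine gap in your term-by-term formulation, but it does not need to be proved: it is dissolved by citing the right form of the Benli--Elma--Ng result. Equation (27) of \cite{BEN23}, as used in the paper, is stated directly for a twist by a full Dirichlet polynomial $X(s)=\sum_{m\leq N}b(m)m^{-s}$ with $N=T^\vartheta$, $\vartheta<1/2$, and carries a single aggregate error term $O(T^{1/2+\varepsilon})$, together with completely explicit secondary main terms $\mathcal{A}_1(1,m)$ and $\mathcal{B}_1(m,T)$ that are to be summed against $b(m)/m$. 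Substituting $b(m)=a_{-1}(m)$ and then bounding $\frac{T}{2\pi}\sum_{2\leq m\leq N}\frac{a_{-1}(m)}{m}\left(\mathcal{A}_1(1,m)+\mathcal{B}_1(m,T)\right)$ over primes and prime powers $\leq X$ gives exactly the $O(T\log T\log\log T)$ you estimated. So if you invoke \cite{BEN23} in its polynomial-twist form rather than as a single-$m$ asymptotic to be summed, the uniformity question never arises, and what remains is the bookkeeping you have already done correctly.
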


\begin{proof}
By equation (27) of Benli, Elma, Ng \cite{BEN23},
    \begin{multline}\label{eq:twist}
    \sum_{0<\gamma<T} \zeta'(\rho) P_X(\rho)^{-1} = \frac{T}{4\pi} \left(\left(\log\frac{T}{2\pi}\right)^2 - 2(1-\gamma_0)\log\frac{T}{2\pi} +2(1-\gamma_0- 3\gamma_1 - \gamma_0^2) \right) \\
    +\frac{T}{2\pi} \sum_{2\leq m\leq N} \frac{a_{-1}(m)}{m} \left(\mathcal{A}_1(1,m) + \mathcal{B}_1(m,T) \right) + O\left(T^{1/2+\varepsilon}\right)
    \end{multline}
where we have specialised their result to $Y(s) \equiv 1$ and where we have 
    \[
    X(s) = P_X(s)^{-1} = \sum_{2\leq m\leq N} \frac{a_{-1}(m)}{m^s}
    \]
with $N=T^\vartheta$ for $\vartheta <1/2$, where the coefficients $a_{-1}(m)$ are given in Lemma~\ref{lem:DirichletExpansionP_X}, and where we have specialised their result to $\mathcal{A}_1(1,m)$ and $\mathcal{B}_1(m,T)$, given by 
\begin{align*}
    \mathcal{A}_1(1,m) &=
    \begin{cases}
        \dfrac{p (\log p)^2}{(p-1)^2} & \text{if } m = p^a \text{ for $p$ prime and } a \in \mathbb{N}, \\
        0 & \text{otherwise},
    \end{cases} 
\end{align*}
and, for primes $p$ and primes $p_1 \neq p_2$, and for natural numbers $a,a_1,a_2$,
\begin{equation*}
\mathcal{B}_1(m,T)= 
        \begin{cases}
        \displaystyle
        -\frac{p}{p - 1} \left( \log p \left( \log\left( \frac{T}{2\pi} \right) - 1 + \gamma_0 \right) + \left(a - \frac{1}{2} \right) \log^2 p \right)
        & \text{if } m = p^a \\[2ex]
        \displaystyle
        \frac{p_1 p_2}{(p_1 - 1)(p_2 - 1)} \log p_1 \log p_2
        &\text{if } m = p_1^{a_1} p_2^{a_2}\\[2ex]
        \displaystyle
        0 & \text{otherwise}.
\end{cases}
\end{equation*}

Note that Lemma~\ref{lem:DirichletExpansionP_X} restricts us to primes $p\leq X$ and $X = O (\log T)$ means that the sum over $m$ on the right-hand side of \eqref{eq:twist} is at worst $O(T \log T \log\log T)$.

Therefore we conclude that for $X = O\left(\log T\right)$ we have,
\[
\sum_{0<\gamma<T} \zeta'(\rho) P_X(\rho)^{-1} = \frac{T}{4\pi} (\log T)^2 + O(T\log T \log\log T)
\]
and since we know this is the same leading order asymptotic for $\zeta'(\rho)$, we have shown both that the splitting conjecture holds in this case, and that the $Z_X'$ term from the hybrid model perfectly captures the leading order asymptotics for zeta.
\end{proof}

\section*{Acknowledgments}
This paper was mostly written during the second author's PhD at the University of York, and various sections appear in his thesis \cite{APCThesis}. Later significant revisions have since been made while the second author was under support from the Heilbronn Institute for Mathematical Research.

\bibliographystyle{amsplain}
\bibliography{bibliography}

\providecommand{\bysame}{\leavevmode\hbox to3em{\hrulefill}\thinspace}
\providecommand{\MR}{\relax\ifhmode\unskip\space\fi MR }
\providecommand{\MRhref}[2]{%
  \href{http://www.ams.org/mathscinet-getitem?mr=#1}{#2}
}
\providecommand{\href}[2]{#2}
\begin{thebibliography}{10}

\bibitem{BEN23}
K.~Benli, E.~Elma, and N.~Ng, \emph{A discrete mean value of the {R}iemann zeta
  function}, Mathematika \textbf{70} (2024), no.~4, Paper No. e12281, 47.

\bibitem{BGM13}
H.~M. Bui, S.~M. Gonek, and M.~B. Milinovich, \emph{{A} hybrid
  {E}uler-{H}adamard product and moments of {$\zeta'(\rho)$}}, Forum Math.
  \textbf{27} (2015), 1799--1828.

\bibitem{CGG88}
J.~B. Conrey, A.~Ghosh, and S.~M. Gonek, \emph{{S}imple zeros of zeta
  functions}, Colloque de Th\'eorie Analytique des Nombres ``Jean Coquet''
  (Marseille, 1985), Publ. Math. Orsay, vol.~88, Univ. Paris XI, Orsay, 1988,
  pp.~77--83.

\bibitem{ErhSil97}
T.~Ehrhardt and B.~Silbermann, \emph{Toeplitz determinants with one
  {F}isher--{H}artwig singularity}, J. Functional Analysis \textbf{148} (1997),
  229--256.

\bibitem{GarSan06}
M.~Z. Garaev and A.~Sankaranarayanan, \emph{{T}he sum involving derivative of
  $\zeta(s)$ over simple zeros}, J. Number Theory \textbf{117} (2006),
  122--130.

\bibitem{Gon93}
S.~M. Gonek, \emph{{A}n explicit formula of {L}andau and its applications to
  the theory of the zeta-function}, A tribute to Emil Grosswald: number theory
  and related analysis, Contemp. Math., vol. 143, Amer. Math. Soc., Providence,
  RI, 1993, pp.~395--413.

\bibitem{GHK07}
S.~M. Gonek, C.~P. Hughes, and J.~P. Keating, \emph{{A} hybrid
  {E}uler-{H}adamard product for the {R}iemann zeta function}, Duke Math. J.
  \textbf{136} (2007), 507--549.

\bibitem{HPC}
C.~Hughes and A.~Pearce-Crump, \emph{Integer moments of the derivatives of the
  {R}iemann zeta function}, (in preparation).

\bibitem{HKO00}
C.~P. Hughes, J.~P. Keating, and N.~O'Connell, \emph{{R}andom matrix theory and
  the derivative of the {R}iemann zeta function}, Proc. R. Soc. Lond. A
  \textbf{456} (2000), 2611--2627.

\bibitem{ingham1942two}
A.~E. Ingham, \emph{{On two conjectures in the theory of numbers}}, Amer. J.
  Math. \textbf{64} (1942), no.~1, 313--319.

\bibitem{KS00a}
J.~P. Keating and N.~C. Snaith, \emph{{R}andom matrix theory and
  $\zeta(1/2+it)$}, Comm. Math. Phys. \textbf{214} (2000), 57--89.

\bibitem{Mehta}
M.~L. Mehta, \emph{Random matrices}, 3rd ed., Academic Press, New York, 2004.

\bibitem{milinovich2012}
M.~B. Milinovich and N.~Ng, \emph{{A note on a conjecture of Gonek}}, Funct.
  Approx. Comment. Math \textbf{46} (2012), no.~2, 177--187.

\bibitem{MV2007}
H.~L. Montgomery and R.~C. Vaughan, \emph{{Multiplicative number theory. {I}.
  {C}lassical theory}}, Cambridge Studies in Advanced Mathematics, vol.~97,
  Cambridge University Press, Cambridge, 2007.

\bibitem{APCThesis}
A.~Pearce-Crump, \emph{{Discrete moments of the Riemann zeta function}}, Ph.D.
  thesis, University of York, 2024.

\bibitem{Sze39}
G.~Szeg{\H{o}}, \emph{{O}rthogonal {P}olynomials}, AMS Colloquium Publications
  XXII, 1939.

\bibitem{Weyl46}
H.~Weyl, \emph{{C}lassical {G}roups}, Princeton University Press, 1946.

\end{thebibliography}

\end{document}